\newcommand{\Hom}{\operatorname{Hom}}
\newcommand{\wotimes}{\operatorname{\widehat{\otimes}}}
\newcommand{\Gal}{\operatorname{Gal}}
\newcommand{\Qp}{\mathbf{Q}_p}
\newcommand{\Qpbar}{\overline{\mathbf{Q}}_p}
\newcommand{\Cp}{\mathbf{C}_p}
\newcommand{\QQ}{\mathbf{Q}}
\newcommand{\Gm}{\mathbf{G}_\mathrm{m}}
\newcommand{\calK}{\mathcal{K}}
\newcommand{\tph}{T_p H}
\newcommand{\tphx}{T_p^\times\! H}
\newcommand{\et}{\widetilde{\mathbf{E}}}
\newcommand{\at}{\widetilde{\mathbf{A}}}
\newcommand{\efont}{\mathbf{E}}
\newcommand{\afont}{\mathbf{A}}
\newcommand{\hatU}{\widehat{U}}
\newcommand{\OO}{\mathcal{O}}
\newcommand{\MM}{\mathfrak{m}}
\newcommand{\val}{\operatorname{val}}
\newcommand{\vp}{\val_p}
\newcommand{\Diff}{\operatorname{Diff}}
\newcommand{\pscal}[1]{\langle #1 \rangle}
\newcommand{\bigO}{\mathrm{O}}
\newcommand{\dcroc}[1]{[\![ #1 ]\!]}
\renewcommand{\phi}{\varphi}
\title{Galois measures and the Katz map}
\author{Laurent Berger}
\address{UMPA, ENS de Lyon \\ 
UMR 5669 du CNRS}
\email{laurent.berger@ens-lyon.fr}
\urladdr{https://perso.ens-lyon.fr/laurent.berger/}
\begin{document}

\begin{abstract}
The purpose of this paper is to explain the proofs of the results announced by Nick Katz in 1977, namely a description of ``Galois measures for Tate modules of height two formal groups over the ring of integers of a finite unramified extension of $\Qp$''. 
\end{abstract}

\date{\today}

\maketitle

\tableofcontents

\setlength{\baselineskip}{18pt}

\section*{Introduction}
\label{intro}

The purpose of this paper is to explain the proofs of the results announced by Nick Katz in 1977, namely a theory of ``Galois measures for Tate modules of height two formal groups over the ring of integers of a finite unramified extension of $\Qp$''. As Katz writes in the introduction of \cite{Kat81}, these results were announced in \cite{Kat77} but ``the details of this general theory remain unpublished''. 

In the author's joint paper \cite{AB24} with Konstantin Ardakov, Katz' results were proved in the case of a Lubin--Tate formal group attached to a uniformizer of $\QQ_{p^2}$ (theorem 1.6.1 of \cite{AB24}). In the present paper, we  simplify and clarify that proof, and extend it to all height two formal groups. We now describe our results in more detail. 

\subsection*{Formal groups}
Let $K$ be a finite unramified extension of $\Qp$ with ring of integers $\OO_K$ and residue field $k$. Let $\Qpbar$ be an algebraic closure of $K$, let $G_K = \Gal(\Qpbar/K)$ and let $\Cp$ be the completion of $\Qpbar$. Let $G$ be a formal group of dimension $1$ and height $2$ over $\OO_K$. Let $A(G) = \OO_K \dcroc{X}$ denote the coordinate ring of $G$. 

Let $H$ be the Cartier dual of $G$ (denoted by $G^{\vee}$ in \cite{Kat77}), so that (cf \cite{Tat67}) $H$ is also a formal group of dimension $1$ and height $2$. The Tate module $\tph$ parameterizes all formal group homomorphisms $G \to \Gm$ over $\OO_{\Cp}$. They are given by power series $t(X) \in \OO_{\Cp} \wotimes A(G) = \OO_{\Cp} \dcroc{X}$ such that $t(0)=1$ and $t(X \oplus_G Y) = t(X) \cdot t(Y)$. 

\subsection*{The covariant bialgebra of $G$}
Let $U(G)$ be the covariant bialgebra of $G$ (we use $U(G)$ instead of Katz' algebra $\Diff(G)$ of translation-invariant differential operators on $G$. We have $U(G) \simeq \Diff(G)$, see lemma \ref{ugdiff}). The set $U(G)$ is the set of all $\OO_K$-linear maps $A(G) \to \OO_K$ that vanish on some power of the augmentation ideal. If $f,g \in U(G)$, their product is defined by $(f \cdot g)(a(X)) = (f \otimes g)a(X \oplus_G Y)$ if $a(X) \in A(G)$. 

Let $\hatU(G)$ denote the set of $\OO_K$-linear maps $A(G) \to \OO_K$ that are continuous for the $(p,X)$-adic topology, so that $\hatU(G)$ is the $p$-adic completion of $U(G)$. Let $\pscal{\cdot,\cdot} : \hatU(G) \times A(G) \to \OO_K$ denote the evaluation pairing.

\subsection*{The Katz map}
Let $C^0_{\Gal}(\tph, \OO_{\Cp})$ denote the $\OO_K$-module of Galois continuous functions, namely those functions $f : \tph \to \OO_{\Cp}$ that are continuous and $G_K$-equivariant: $f(\sigma(t)) = \sigma(f(t))$ for all $t \in \tph$ and $\sigma \in G_K$. 

The evaluation pairing extends to $\pscal{\cdot,\cdot} : \hatU(G) \times (\OO_{\Cp} \wotimes A(G)) \to \OO_{\Cp}$. If $u \in \hatU(G)$, then $t \mapsto \pscal{u, t(X)}$ is a Galois continuous function $\tph \to \OO_{\Cp}$. The Katz map is
\[ \calK : \hatU(G) \to C^0_{\Gal}(\tph, \OO_{\Cp}), \]
defined by $\calK(u) (t) = \pscal{u, t(X)}$ (this is the map $(\ast)$ on page 59 of \cite{Kat77}). 

\subsection*{Galois measures}
Let $S$ be a $p$-adically complete and separated flat $\OO_K$-algebra. Applying the functor $\Hom_{\OO_K}(\cdot,S)$ to $\calK : \hatU(G) \to C^0_{\Gal}(\tph, \OO_{\Cp})$ gives an $S$-linear map (the map $(\ast\ast)$ on page 59 of \cite{Kat77})
\[ \calK^\ast : \Hom_{\OO_K} ( C^0_{\Gal}(\tph, \OO_{\Cp}), S) \to S \wotimes A(G). \]
If $f(X) \in S \wotimes A(G)$, let $\psi f \in S[1/p] \wotimes A(G)$ be defined by (compare with \S III of \cite{Col79}) 
\[ (\psi f)([p]_G(X)) = \frac{1}{p^2} \cdot \sum_{\pi \in G[p]} f(X \oplus_G \pi). \] 
We say that $f(X) \in S \wotimes A(G)$ is $\psi$-integral if $\psi^n f \in S \wotimes A(G)$ for all $n \geq 0$.
The main result claimed by Katz (see page 60 of \cite{Kat77}) is the following.

\begin{enonce*}{Theorem A}
\label{theoA}
The map $\calK^\ast$ is injective, and its image is the set of $\psi$-integral power series $f(X) \in S \wotimes A(G)$. 

Moreover, a Galois measure $\mu$ is supported on $\tphx$ if and only if $\psi(\calK^\ast(\mu)) = 0$.
\end{enonce*}

\subsection*{Main results}
The main result of this paper is a proof of theorem B below. Choose an element $t = (t_0,t_1,\hdots) \in \tphx = \tph \setminus p \cdot \tph$ and let $\calK_t : \hatU(G) \to \OO_{\Cp}$ denote the map $u \mapsto \calK(u)(t)$. Let $K_\infty$ denote the completion of the field generated over $K$ by the $t_n$ for $n \geq 1$. Since $\calK(u)$ is $G_K$-equivariant, the Ax-Sen-Tate theorem implies that $\calK(u)(t) \in \OO_{K_\infty}$ for all $t \in \tph$. 

\begin{enonce*}{Theorem B}
\label{theoB}
The map $\calK$ is injective, and the map $\calK_t : \hatU(G) \to \OO_{K_\infty}$ is surjective.
\end{enonce*}

Theorem A follows from theorem B by mostly formal arguments, that are carried out in detail in \cite{AB24} for the Lubin--Tate case, see in particular coro 3.4.10 of ibid (the hypothesis ``$\tau : G_L \to o_L^\times$ is surjective'' is replaced here by ``$G_K$ acts transitively on $\tphx$'', cf lemma \ref{galsurj} below). To keep this paper short, we focus on proving theorem B. 

Besides clarifying the arguments of \S 3 of \cite{AB24}, our proof also shows that $\hatU(G)$ has a perfectoid-like nature. In particular, the injectivity of $\calK$ is related to the following result in $p$-adic Hodge theory (cf 5.1.4 of \cite{Fon94}, and its Lubin--Tate generalizations such as prop 9.6 of \cite{Col02}): $\{ x \in \at^+$ such that $\theta \circ \phi^n(x)=0$ for all $n \geq 0\} = \pi \cdot \at^+$.

\section{Preliminaries}

In all this paper, we let $q=p^2$. The multiplication-by-$p$ map on $G$ is given by a power series $[p]_G(X)  = \sum_{i \geq 1} r_i X^i$ with $r_1=p$ and $r_i \in p \OO_K$ for $i \leq q-1$ and $r_q \in \OO_K^\times$.

\begin{lemm}
\label{galsurj}
If $t,u \in \tphx$, there exists $\sigma \in G_K$ such that $u = \sigma(t)$.
\end{lemm}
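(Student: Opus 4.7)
The plan is to establish the transitivity level by level on $\tph/p^n\tph$ and then pass to the inverse limit via compactness of $G_K$. Writing $t = (t_n)_{n \geq 0}$ and $u = (u_n)_{n \geq 0}$ with $t_n, u_n \in H[p^n](\OO_{\Cp})$, $[p]_H(t_{n+1}) = t_n$, and $t_0 = u_0 = 0$, the condition $t \in \tphx$ amounts to $t_1 \neq 0$.

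I would first treat the base case: that $G_K$ acts transitively on $H[p] \setminus \{0\}$. Since $H$ is also of dimension $1$ and height $2$, the series $[p]_H(X) = \sum_{i \geq 1} s_i X^i$ has the same shape as $[p]_G(X)$ in the preliminaries, i.e. $s_1 = p$, $s_i \in p \OO_K$ for $i \leq q-1$, and $s_q \in \OO_K^\times$. The Newton polygon of $[p]_H(X)/X$ then has a single non-zero slope $-1/(q-1)$ on $[0, q-1]$, so $H[p] \setminus \{0\}$ consists of exactly $q-1 = p^2-1$ elements, each with $\vp(\pi) = 1/(p^2-1)$. For any such $\pi$, the fact that $\vp(\pi) \notin \mathbf{Z} = \vp(K^\times)$ forces $e(K(\pi)/K) \geq p^2-1$, while Weierstrass preparation of $[p]_H(X)/X$ bounds $[K(\pi):K] \leq p^2-1$. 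Equality at both places then means that the $p^2-1$ Galois conjugates of $\pi$ exhaust $H[p] \setminus \{0\}$.

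Next I would carry out the inductive step. Suppose some $\tau \in G_K$ has already been constructed with $\tau(t_m) = u_m$ for $m \leq n-1$; after replacing $t$ by $\tau(t)$ we may assume $t_m = u_m$ for $m \leq n-1$. Set $L_{n-1} = K(u_1, \ldots, u_{n-1})$. Both $t_n$ and $u_n$ are roots of $[p]_H(X) - u_{n-1} \in \OO_{L_{n-1}}\dcroc{X}$. A Newton polygon computation, yielding inductively $\vp(u_n) = \vp(u_{n-1})/p^2 = 1/((p^2-1)p^{2(n-1)})$, shows that the Weierstrass polynomial of $[p]_H(X) - u_{n-1}$ has degree $p^2$ with all $p^2$ roots of the same valuation $\vp(u_n)$. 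The same ramification-vs-degree comparison as in the base case will then force $[L_{n-1}(u_n):L_{n-1}] = p^2$, so this polynomial is irreducible over $L_{n-1}$ and $G_{L_{n-1}}$ permutes its roots transitively; in particular, some $\sigma_n \in G_{L_{n-1}}$ sends $t_n$ to $u_n$ while fixing $u_1, \ldots, u_{n-1}$.

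Finally, the iteration produces a sequence $\tau_n \in G_K$ with $\tau_n(t_m) = u_m$ for $m \leq n$, in which $\tau_{n+1}$ differs from $\tau_n$ by right multiplication by an element of $G_{K(t_1, \ldots, t_n)}$. The cosets $\tau_n \cdot G_{K(t_1, \ldots, t_n)}$ then form a decreasing sequence of closed nonempty subsets of the compact group $G_K$; any element $\sigma$ in their intersection satisfies $\sigma(t) = u$. The hardest step will be the irreducibility of the relevant Weierstrass polynomial at each level, but it reduces uniformly to the coincidence of the lower bound $p^2$ on the ramification index (forced by $\vp(t_n)$ not lying in $\vp(L_{n-1}^\times)$) with the upper bound $p^2$ on the degree (coming from Weierstrass preparation).
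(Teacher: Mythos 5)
Your argument is correct, but the mechanism you use for the key irreducibility input is genuinely different from the paper's. The paper works with the division series $[p^{n+1}]_H(X)/[p^n]_H(X)$ over $K$ itself: its distinguished Weierstrass factor has constant term $p$, which is a uniformizer of $\OO_K$ precisely because $K/\Qp$ is unramified, so the polynomial is Eisenstein, hence irreducible, and all of $H[p^{n+1}]\setminus H[p^n]$ is a single $G_K$-orbit in one stroke. You instead argue relatively up the tower, applying Weierstrass preparation and Newton polygons to the degree-$q$ polynomial $[p]_H(X)-u_{n-1}$ over $L_{n-1}=K(u_1,\dots,u_{n-1})$, and squeezing $[L_{n-1}(u_n):L_{n-1}]$ between the lower bound $p^2$ coming from the order of $\vp(u_n)$ in $\QQ/\vp(L_{n-1}^\times)$ and the upper bound $p^2$ from the Weierstrass degree. (One small imprecision: it is not merely that $\vp(u_n)\notin\vp(L_{n-1}^\times)$ that forces $e\geq p^2$, but that $\vp(u_n)$ has exact order $p^2$ modulo $\vp(L_{n-1}^\times)$; your explicit valuation $\vp(u_n)=1/((p^2-1)p^{2(n-1)})$ together with the inductively known value group of $L_{n-1}$ does give this.) Your route is longer but buys two things the paper leaves implicit or proves elsewhere: the explicit compactness argument passing from level-by-level transitivity to transitivity on $\tphx$ (which the paper's two-line proof silently omits), and the total ramification and exact valuations $\vp(u_n)=1/q^{n-1}(q-1)$ of the tower, which the paper only extracts later (e.g.\ in the proof of lemma \ref{kerthetet}). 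The paper's proof buys brevity and makes transparent exactly where the unramifiedness of $K/\Qp$ enters.
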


\begin{proof}
For all $n \geq 0$, in the Weierstrass factorization of $[p^{n+1}]_H(X) / [p^n]_H(X)$, the distinguished poly\-nomial is Eisenstein, as its constant term is $p$ and $K/\Qp$ is unramified. 

Its roots are therefore conjugate by $G_K$.
\end{proof}

\begin{lemm}
\label{chgvar}
There is a change of variables such that $[p]_G(X) = pX+ \bigO(X^q)$.
\end{lemm}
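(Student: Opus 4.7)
The plan is to kill the coefficients $r_2, \ldots, r_{q-1}$ successively by substitutions of the form $h_n(X) = X + cX^n$. Suppose inductively that, after a prior coordinate change, the multiplication-by-$p$ series has the form $[p]_G(X) = pX + r_n X^n + \bigO(X^{n+1})$ for some $n$ with $2 \le n \le q-1$; observe that necessarily $r_n \in p\OO_K$, since the property ``$r_i \in p\OO_K$ for $i \le q-1$'' is an isomorphism invariant (it is equivalent to $G$ having height $\ge 2$).

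For $c \in \OO_K$ to be chosen, consider $h_n(X) = X + cX^n$, whose compositional inverse is $h_n^{-1}(Y) = Y - cY^n + \bigO(Y^{n+1})$. A brief computation, expanding $[p]_G(X + cX^n)$ modulo $X^{n+1}$, shows that the $X^n$-coefficient of $h_n^{-1} \circ [p]_G \circ h_n$ equals $r_n - c(p^n - p)$, while the lower coefficients (which are zero by the induction hypothesis) remain zero. Since $p^n - p = p(p^{n-1}-1)$ with $p^{n-1}-1 \in \OO_K^\times$, we have $\vp(p^n - p) = 1$; combined with $r_n \in p\OO_K$, this allows the choice $c = r_n/(p^n - p) \in \OO_K$, which kills the $X^n$ coefficient.

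Iterating this construction from $n = 2$ up to $n = q-1$, and composing the successive changes of variable, produces a substitution $h \in X + X^2\OO_K\dcroc{X}$ for which $h^{-1} \circ [p]_G \circ h = pX + \bigO(X^q)$, as required. The only step needing genuine care is verifying that $\vp(p^n - p) = 1$ for \emph{every} $n$ in the range $2 \le n \le q-1$, including $n = p$; this is transparent from the factorization $p^n - p = p(p^{n-1}-1)$, and the rest of the argument is a routine bookkeeping of formal power series identities.
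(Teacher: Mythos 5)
Your proof is correct, but it takes a genuinely different route from the paper. The paper observes that $r(X)+X^q$ (where $r(X)=\sum_{i=1}^{q-1}r_iX^i$) is a Lubin--Tate series for the uniformizer $p$ and invokes Lemma 1 of \cite{LT65} to conjugate it to $pX+X^q$ in one stroke; since $[p]_G(X)\equiv r(X)+X^q \bmod X^q\OO_K\dcroc{X}$ up to the unit $r_q$, the same $h$ cleans $[p]_G$. You instead kill the coefficients $r_2,\dots,r_{q-1}$ one at a time by conjugating with $X+cX^n$, and your computation of the new $X^n$-coefficient as $r_n-c(p^n-p)$, together with $\vp(p^n-p)=1$ and $r_n\in p\OO_K$, is right; the finitely many substitutions compose to the desired $h$. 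What your approach buys is that it is completely elementary and makes visible exactly which hypotheses are used (only $\vp(r_n)\geq \vp(p)=\vp(p^n-p)$ for $2\le n\le q-1$, i.e.\ height $\geq 2$ of the reduction), whereas the paper's argument is shorter but leans on the Lubin--Tate machinery. One point in your induction deserves to be made explicit: the conjugated series is again the multiplication-by-$p$ series of a formal group law over $\OO_K$ isomorphic to $G$ (because $h_n\in\OO_K\dcroc{X}$ with $h_n'(0)=1$), which is what licenses your appeal to isomorphism-invariance of the condition $r_i\in p\OO_K$ for $i\le q-1$. Alternatively, and more directly, conjugation by $h_n$ visibly preserves the congruence $[p]_G(X)\equiv \bigO(X^q) \bmod p$, since $h_n$ and $h_n^{-1}$ preserve the order of vanishing at $X=0$ of a power series over $k$; either justification closes the only non-routine step.
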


\begin{proof}
If $r(X) = \sum_{i =1}^{q-1} r_i X^i$, then $r(X)+X^q$ is a Lubin--Tate power series for the uniformizer $p$, so by lemma 1 of \cite{LT65} there exists a reversible power series $h(X)$ such that $h^{-1}  \circ (r(X)+X^q) \circ h = pX+X^q$. We then have $h^{-1}  \circ [p]_G(X) \circ h = pX+\bigO(X^q)$.
\end{proof}

A coordinate satisying the above conditions is said to be clean. Let $\log_G(X) \in K \dcroc{X}$ denote the logarithm attached to the formal group $G$.

\begin{lemm}
\label{logspec}
If $X$ is clean, then $\log_G(X) = X + \alpha \cdot X^q/p + \bigO(X^{q+1})$ for some $\alpha \in \OO_K^\times$. 
\end{lemm}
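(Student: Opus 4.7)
The plan is to use the functional equation $\log_G \circ [p]_G = p \log_G$ and compare Taylor coefficients. Write $\log_G(X) = \sum_{n \geq 1} a_n X^n$ with $a_1 = 1$, and write the clean power series as $[p]_G(X) = pX + \sum_{i \geq q} c_i X^i$. The first observation I need is that $c_q \in \OO_K^\times$: originally $r_q \in \OO_K^\times$ because $G$ has height $2$, which amounts to saying that the reduction $[p]_G \bmod p$ starts with a unit times $X^q$; this property is preserved by a reversible change of variable (it only depends on the valuation of the leading coefficient of $[p]_G \bmod p$), so $c_q$ is still a unit.

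Next I would compute the coefficient of $X^m$ on both sides of $\log_G([p]_G(X)) = p \log_G(X)$ for $1 \leq m \leq q$. Since $[p]_G(X)^n = p^n X^n + n p^{n-1} c_q X^{n+q-1} + \bigO(X^{n+q})$, for $m < q$ the only contribution to the coefficient of $X^m$ in $\log_G([p]_G(X))$ is $a_m p^m$; the identity $a_m p^m = p a_m$ then forces $a_m = 0$ for $2 \leq m \leq q-1$. For $m = q$, the only two contributions come from the $a_1 \cdot [p]_G(X)$ term (contributing $c_q$) and the $a_q \cdot [p]_G(X)^q$ term (contributing $a_q p^q$), giving
\[
c_q + a_q p^q = p a_q, \qquad \text{hence} \qquad p a_q = \frac{c_q}{1 - p^{q-1}}.
\]
Setting $\alpha = c_q /(1 - p^{q-1})$, the factor $(1-p^{q-1})^{-1}$ lies in $1 + p^{q-1}\OO_K \subset \OO_K^\times$, so $\alpha \in \OO_K^\times$, and $a_q = \alpha/p$, which is exactly the claim.

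There is no real obstacle here: the only point requiring slight care is justifying that cleanness preserves the unit property of the coefficient of $X^q$ in $[p]_G$, and then the rest is a two-step recursive coefficient comparison using the logarithm's functional equation.
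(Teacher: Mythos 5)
Your proof is correct. The paper states Lemma \ref{logspec} without proof, and your argument --- extracting the coefficients $a_m$ of $\log_G$ from the functional equation $\log_G([p]_G(X)) = p\log_G(X)$, using $[p]_G(X)^n = p^nX^n + np^{n-1}c_qX^{n+q-1} + \bigO(X^{n+q})$ --- is exactly the standard computation the author leaves to the reader; the observation that cleanness (via Lemma \ref{chgvar}) keeps the coefficient of $X^q$ a unit, since the lowest-degree coefficient of $[p]_G \bmod p$ transforms by a unit under a reversible change of variable, is the one detail worth making explicit, and you did.
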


Let $t$ be an element of $\tph = \Hom_{\OO_{\Cp}}(G,\Gm)$, and let $t(X) = 1 + \sum_{i \geq 1} a_i(t) X^i \in \OO_{\Cp} \dcroc{X}$ be the corresponding Hom from $G$ to $\Gm$. 

\begin{lemm}
\label{galth}
If $\sigma \in G_K$ then $a_n(\sigma(t)) = \sigma(a_n(t))$ for all $n \geq 1$.
\end{lemm}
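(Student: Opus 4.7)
The plan is to unpack the definition of the $G_K$-action on $\tph = \Hom_{\OO_{\Cp}}(G,\Gm)$ and observe that, in terms of the power-series representation of an element of $\tph$, it is precisely the coefficient-wise action on $\OO_{\Cp} \dcroc{X}$. The key point is that $G$ and $\Gm$ are both defined over $\OO_K$, so the coordinate ring $A(G) = \OO_K \dcroc{X}$ and the formal group law $X \oplus_G Y$ have coefficients in $\OO_K$ and are fixed by $G_K$; consequently the only source of Galois action on $t(X) \in \OO_{\Cp} \wotimes A(G) = \OO_{\Cp} \dcroc{X}$ is the action of $\sigma$ on the base ring $\OO_{\Cp}$.

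Concretely, I would define $\sigma(t)$ to be the power series obtained by applying $\sigma$ to each coefficient of $t(X)$, giving $\sigma(t)(X) = 1 + \sum_{n \geq 1} \sigma(a_n(t)) X^n$. To check that this really represents the Galois translate in $\tph$, one verifies that $\sigma(t)$ is still a formal group homomorphism $G \to \Gm$ over $\OO_{\Cp}$: applying $\sigma$ termwise to the identity $t(X \oplus_G Y) = t(X) \cdot t(Y)$ and using that $\oplus_G$ has $\sigma$-fixed coefficients in $\OO_K$ produces $\sigma(t)(X \oplus_G Y) = \sigma(t)(X) \cdot \sigma(t)(Y)$, while $\sigma(t)(0) = \sigma(1) = 1$ is automatic. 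By uniqueness of the power-series expansion, comparing $\sigma(t)(X) = 1 + \sum_n \sigma(a_n(t)) X^n$ with the defining expansion $\sigma(t)(X) = 1 + \sum_n a_n(\sigma(t)) X^n$ yields $a_n(\sigma(t)) = \sigma(a_n(t))$ for all $n \geq 1$.

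There is no substantive obstacle: the lemma is essentially a tautology once the $G_K$-action on $\tph$ is traced through the identification of a homomorphism $G \to \Gm$ over $\OO_{\Cp}$ with its defining power series in $\OO_{\Cp} \wotimes A(G)$. The only thing to be careful about is recording why $\sigma$ preserves the homomorphism axioms, which relies precisely on $G$ and $\Gm$ being defined over the unramified base $\OO_K$.
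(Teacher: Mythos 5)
Your proposal is correct and is essentially the paper's argument: the paper simply asserts that the identification $\tph = \Hom_{\OO_{\Cp}}(G,\Gm)$ is $G_K$-equivariant, and since $G$ and $\Gm$ are defined over $\OO_K$, the Galois action on the right-hand side is the coefficient-wise one, which is exactly what you verify. The only point you gloss over (as does the paper) is that the coefficient-wise action on homomorphisms really does correspond, under Cartier duality, to the intrinsic Galois action on the Tate module; this is the naturality of the duality isomorphism and is standard.
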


\begin{proof}
The isomorphism $\tph = \Hom_{\OO_{\Cp}}(G,\Gm)$ is $G_K$-equivariant.
\end{proof}

\begin{lemm}
\label{acoeff}
We have $a_n(t) = a_1(t)^n/n!$ if $n \leq q-1$ and $a_q(t) = a_1(t)^q/q! + \alpha \cdot a_1(t)/p$.
\end{lemm}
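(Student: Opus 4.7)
The plan is to identify $t(X)$ with $\exp(a_1(t) \cdot \log_G(X))$ as formal power series over $\Cp$, and then read off both formulae from the expansion provided by Lemma \ref{logspec}.

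First I would argue that $\log_{\Gm} \circ\, t$ is additive for the group law on $G$. Indeed, since $t : G \to \Gm$ is a homomorphism of formal groups and $\log_{\Gm}(1+Y) = Y - Y^2/2 + \cdots$ is the canonical isomorphism $\Gm \to \hat{\mathbf{G}}_a$ over $\Cp$, the power series $\log_{\Gm}(t(X))$ is a primitive for $\oplus_G$ inside $\Cp \dcroc{X}$. Any such primitive is a $\Cp$-multiple of $\log_G(X)$, because $\log_G$ is (up to scalar) the unique homomorphism $G \to \hat{\mathbf{G}}_a$ over $\Cp$. Matching the coefficient of $X$ forces the scalar to be $a_1(t)$, so exponentiating yields the identity $t(X) = \exp\bigl(a_1(t) \cdot \log_G(X)\bigr)$ in $\Cp \dcroc{X}$.

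With this in hand, both statements are a direct expansion. Writing $a = a_1(t)$ and invoking Lemma \ref{logspec}, we have $a \cdot \log_G(X) = aX + (\alpha a/p) X^q + \bigO(X^{q+1})$. For $n \leq q-1$, only the linear term $aX$ affects the coefficient of $X^n$ in the exponential, giving $a_n(t) = a^n/n!$. For $n = q$, factor $\exp(a \cdot \log_G(X)) = \exp(aX) \cdot \exp\bigl((\alpha a/p) X^q + \bigO(X^{q+1})\bigr)$; the first factor contributes $a^q/q!$ to the $X^q$ coefficient, and the second contributes $\alpha a / p$ (via its linear term in $X^q$ multiplied by the constant term $1$ of $\exp(aX)$). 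Summing gives $a_q(t) = a^q/q! + \alpha a/p$, as claimed.

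I do not anticipate any real obstacle: the whole lemma reduces to the clean structural observation that $t(X)$ is the exponential of a scalar multiple of $\log_G(X)$. The only point worth noting is that the identity $t(X) = \exp(a_1(t) \cdot \log_G(X))$ is naturally an equality in $\Cp \dcroc{X}$, so the individual summands $a^n/n!$ and $\alpha a/p$ need not lie in $\OO_{\Cp}$ on their own, even though the combination $a_q(t)$ does.
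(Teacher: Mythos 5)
Your proposal is correct and follows exactly the paper's argument: the proof in the text simply asserts the identity $t(X) = \exp(a_1(t)\cdot \log_G(X))$ and invokes Lemma \ref{logspec}, while you additionally justify that identity via the uniqueness of homomorphisms $G \to \hat{\mathbf{G}}_a$ up to scalar. The expansion of the exponential and the resulting coefficient computations match the intended proof.
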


\begin{proof}
Since $t(X) = \exp( a_1(t) \cdot \log_G(X) )$, the claim follows from lemma \ref{logspec}.
\end{proof}

\begin{prop}
\label{katzval}
If $t \in \tphx$,  the abscissa of the breakpoints of the Newton polygon of $t(X)-1$ are the $p^m$ with $m \geq 0$, and $\vp(a_{p^m}(t)) = 1/p^{m-1}(q-1)$ for all $m \geq 0$.
\end{prop}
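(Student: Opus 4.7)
The plan is to combine lemma \ref{acoeff} (closed-form expressions for $a_n(t)$ with $n \leq q$) with an identification of the non-zero zeros of $t(X)-1$ as the non-identity elements of a formal subgroup of $G$ of height one, then read off the Newton polygon.

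\textbf{Step 1: valuation of $a_1(t)$.} Let $\lambda = a_1(t)$ and $v = \vp(\lambda)$. By lemma \ref{galth} and lemma \ref{galsurj}, $v$ is independent of $t \in \tphx$. Rewriting lemma \ref{acoeff} as $q! \, a_q = \lambda(\lambda^{q-1} + \beta)$ with $\beta := \alpha q!/p$ of valuation $p$, valuation analysis on $a_q \in \OO_{\Cp}$ forces $v \in \{p/(q-1)\} \cup [1, \infty)$: the threshold value is the unique one at which the two summands $\lambda^q/q!$ and $\alpha\lambda/p$ have matching valuation and may cancel, while $v \geq 1$ makes each summand individually integral. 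Pinning down $v = p/(q-1)$ rather than some $v \geq 1$ is the delicate step; I would argue by combining Galois transitivity (forcing all primitive elements of $L := a_1(T_p H) \subset \OO_{\Cp}$ to share the same valuation, necessarily the minimum achieved on $L$) with the explicit construction of a $\lambda \in L$ of valuation $p/(q-1)$, e.g.\ arising from the logarithm of a primitive $p$-torsion point of $H$ via Cartier duality.

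\textbf{Step 2: zeros of $t(X)-1$.} Since $t : G \to \Gm$ is non-trivial, its kernel is a formal subgroup of $G$ of height $2-1=1$. Hence $\ker t \cap G[p^n]$ is cyclic of order $p^n$, with $p^{k-1}(p-1)$ elements of exact order $p^k$ for each $1 \leq k \leq n$. These are primitive $p^k$-torsion points of $G$, which all share the common valuation $1/(q^{k-1}(q-1))$; this is read off the Newton polygon of $[p^k]_G X$, easily computed by iterating $[p]_G X = pX + cX^q + \bigO(X^{q+1})$ in a clean coordinate ($c \in \OO_K^\times$).

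\textbf{Step 3: Newton polygon.} Write $t(X)-1 = X \cdot \tilde s(X)$, so $\tilde s(0) = \lambda$. The non-zero zeros of $\tilde s$ in the open unit disk are exactly the non-identity elements of $\ker t$. Weierstrass preparation on nested closed subdisks (enlarged to admit all zeros of order $\leq p^n$ for $n = 1, 2, \ldots$) shows that the Newton polygon of $\tilde s$ consists, for each $k \geq 1$, of a segment of slope $-1/(q^{k-1}(q-1))$ and horizontal width $p^{k-1}(p-1)$, starting from the leftmost vertex $(0, p/(q-1))$ supplied by step 1. Since $\sum_{j=1}^k p^{j-1}(p-1) = p^k - 1$, the successive vertices lie at abscissae $p^k - 1$; translating right by one to recover the polygon of $t(X)-1$ places the breakpoints at $(p^m, 1/(p^{m-1}(q-1)))$ for $m \geq 0$, as claimed. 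The main obstacle is step 1; the remaining steps are standard Newton-polygon bookkeeping.
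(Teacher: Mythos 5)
Your Step 1 dichotomy is correct: integrality of $a_q = \lambda^q/q! + \alpha\lambda/p$ together with $\vp(q!) = p+1$ does force $v = \vp(\lambda) \in \{p/(q-1)\}\cup[1,\infty)$, and in particular gives $v \geq p/(q-1)$ by a route genuinely different from the paper's (which obtains this lower bound by summing the valuations of the $p^m-p^{m-1}$ zeros of $t(X)-1$ lying in $\ker t \cap (G[p^m]\setminus G[p^{m-1}])$ and comparing the total, $p/(q-1)$, with the height of the leftmost vertex of the Newton polygon). But the step you yourself flag as delicate --- ruling out $v \geq 1$, i.e.\ the upper bound $\vp(a_1(t)) \leq p/(q-1)$ --- is genuinely missing, and the fix you sketch does not work. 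The logarithm $\log_H$ vanishes on torsion points (since $p\log_H(\eta) = \log_H([p]_H\eta) = 0$), so ``the logarithm of a primitive $p$-torsion point of $H$'' is $0$; the quantity $a_1(t)$ is a Hodge--Tate period, not a value of $\log_H$, and there is no evident construction of an element of $a_1(\tph)$ of valuation $p/(q-1)$ along the lines you propose. Nothing in your Steps 2--3 supplies the upper bound either: if $v\geq 1$, the Newton polygon simply starts higher and may carry extra zeros, with all coefficients still integral, so no contradiction arises from root-counting alone. The paper closes this gap with a separate argument that you need (or an equivalent): since $t\in\tphx$ there is $\eta\in G[p]$ with $t(\eta)=\zeta$ a primitive $p$-th root of unity, $\vp(\eta)=1/(q-1)$ and $\vp(\zeta-1)=1/(p-1)$; if $v>p/(q-1)$, then every term of $t(\eta)-1$ has valuation $>1/(p-1)$ (using $a_n=\lambda^n/n!$ for $n<q$ and $\vp(a_n\eta^n)\geq n/(q-1)$ for $n\geq q$), which is impossible.

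A secondary gap: in Steps 2--3 you assert that the zeros of $t(X)-1$ in the open unit disk are \emph{exactly} the points of $\cup_m\ker(t_m)\subset G[p^\infty]$. A priori there could be further, non-torsion zeros; the paper rules them out only after the two bounds on $\vp(a_1(t))$ show that the torsion zeros already exhaust the total vertical descent $p/(q-1)$ available to the Newton polygon. Once Step 1 is repaired, this point and your remaining bookkeeping (breakpoints at abscissae $p^m$ with ordinates $1/p^{m-1}(q-1)$) go through.
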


\begin{proof}
The Newton polygon of $t(X)-1$ is independant of the choice of coordinate $X$, and we choose a clean coordinate on $G$.
We first prove that $\vp(a_1(t)) \leq p/(q-1)$. Let $\zeta$ be a primitive $p$-th root of $1$. Since $t \in \tphx$, there exists $\eta \in G[p]$ such that $t(\eta)=\zeta$. We have $\vp(\eta)=1/(q-1)$ and $\vp(\zeta-1)=1/(p-1)$. 
Since $X$ is a clean coordinate on $G$, we have $a_n(t) = a_1(t)^n/n!$ if $n \leq q-1$ by lemma \ref{acoeff}.
If $\vp(a_1) > p/(q-1)$, then for $n < q$
\[ \vp(a_n(t) \eta^n) = \vp\left(a_1(t)^n \frac{\eta^n}{n!}\right)  > \frac{np+n}{q-1}-\frac{n-s_p(n)}{p-1} = \frac{s_p(n)}{p-1}  \geq \frac{1}{p-1}, \] 
while $\vp(\eta^n) = n/(q-1) > 1$ if $n \geq q$. It is therefore not possible to have $t(\eta)-1=\zeta-1$, so that $\vp(a_1(t)) \leq p/(q-1)$.

For every $m \geq 1$, $t(X)-1$ has $p^m-p^{m-1}$ zeroes in $G[p^m] \setminus G[p^{m-1}]$ and they are all of valuation $1/(q^m-q^{m-1})$. Since $\sum_{m \geq 1} (p^m-p^{m-1}) / (q^m-q^{m-1}) = p/(q-1)$, the theory of Newton polygons tells us that $\vp(a_1(t)) = p/(q-1)$, and then that $t(X)-1$ cannot have other zeroes. 

The valuations of the $a_{p^m}(t)$ can then be read off the Newton polygon of $t(X)-1$.
\end{proof}

\begin{rema}
\label{kalm2}
{\hspace{1pt}}
\begin{enumerate}
\item Compare with coro 3.5.8 of \cite{AB24};
\item Compare with the proof of lemma 13 of \cite{Box86};
\item Prop \ref{katzval} applied to $m=0$ gives us lemma 1 on page 62 of \cite{Kat77};
\item Lemma \ref{acoeff} and the fact that $\vp(a_q(t)) = 1/p(q-1)$ imply that $\vp(a_1(t) \cdot ( a_1(t)^{q-1}/q! + \alpha /p )) = 1/p(q-1)$
and hence $\vp(a_1(t)^{q-1} p / \alpha q! +1) = 1 - 1/p$, compare with lemma 2 on page 63 of \cite{Kat77}.
\end{enumerate}
\end{rema}

\section{The Katz map}

Let $C^0_{\Gal}(\tph, \OO_{\Cp})$ denote the set of Galois continuous functions $f : \tph \to \OO_{\Cp}$, namely those maps that are continuous and $G_K$-equivariant: $f(\sigma(t)) = \sigma(f(t))$ for all $t \in \tph$ and $\sigma \in G_K$. By lemma \ref{galsurj}, any two elements of $p^n \cdot \tphx$ are Galois conjugates, so if we fix $t \in \tphx$, a function $f \in C^0_{\Gal}(\tph, \OO_{\Cp})$ is determined by $\{ f(p^n t) \}_{n \geq 0}$. In addition, we have $f(p^n t) \to f(0)$ as $n \to +\infty$ by continuity, and $f(0) \in \OO_K$ by the Ax-Sen-Tate theorem since $f(0)$ is fixed by $G_K$. 

Write $t = (t_0,t_1, \hdots) \in \tph$ and let $K_n$ denote the extension of $K$ generated by $t_n$ and let $K_\infty$ be the completion of $\cup_{n \geq 0} K_n$. By the Ax-Sen-Tate theorem, we have $f(p^n t) \in \OO_{K_\infty}$ for all $n$. Let $\prod_{n \geq 0}' \OO_{K_\infty}$ denote the $\OO_K$-algebra of sequences $\{ f_n \}_{n \geq 0}$ with $f_n\in \OO_{K_\infty}$ and such that $\{ f_n \}_{n \geq 0}$ converges to an element of $\OO_K$ as $n \to +\infty$. 
The map $f \mapsto \{ f(p^n t) \}_{n \geq 0}$ gives an isomorphism $C^0_{\Gal}(\tph, \OO_{\Cp}) \to \prod_{n \geq 0}' \OO_{K_\infty}$. 

Recall that $U(G)$ is the covariant bialgebra of $G$, namely the set of all $\OO_K$-linear maps $A(G) \to \OO_K$ that vanish on some power of the augmentation ideal. If $f,g \in U(G)$, their product is defined by $(f \cdot g)(a(X)) = (f \otimes g)a(X \oplus_G Y)$.  In \cite{Kat77} and \cite{Kat81}, Katz introduces the algebra $\Diff(G)$ of translation-invariant differential operators on $G$.

\begin{lemm}
\label{ugdiff}
If $D \in \Diff(G)$, then $[ f \mapsto D(f)(0) ] \in U(G)$, and this map gives rise to an isomorphism between the $\OO_K$-algebras $\Diff(G)$ and $U(G)$.
\end{lemm}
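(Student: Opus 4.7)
The plan is to exhibit the standard Hopf-algebra duality between translation-invariant differential operators and the distribution algebra. Write $\Delta : A(G) \to A(G) \wotimes A(G)$ for the coproduct $a(X) \mapsto a(X \oplus_G Y)$, $\varepsilon : A(G) \to \OO_K$ for the counit $a \mapsto a(0)$, and $I = \ker \varepsilon$ for the augmentation ideal. Translation-invariance of $D \in \Diff(G)$ can be rephrased in this language as $\Delta \circ D = (\Id \otimes D) \circ \Delta$.

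I first check that $u_D : f \mapsto D(f)(0) = \varepsilon(D(f))$ indeed lies in $U(G)$. By definition every $D \in \Diff(G)$ has some finite order $n$, and in the coordinate $A(G) = \OO_K \dcroc{X}$ such an operator can be written as $\sum_{k=0}^n a_k(X) \partial^k$; applied to any element of $I^{n+1}$ it lands in $I$, so $u_D$ vanishes on $I^{n+1}$ and belongs to $U(G)$.

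Conversely, for $u \in U(G)$ vanishing on $I^{n+1}$, I set $D_u(a) := (\Id \otimes u)(\Delta(a))$. The same explicit coordinate computation shows that $D_u$ is a differential operator of order $\leq n$, and coassociativity $(\Id \otimes \Delta)\Delta = (\Delta \otimes \Id)\Delta$ immediately yields $\Delta \circ D_u = (\Id \otimes D_u) \circ \Delta$, so $D_u \in \Diff(G)$. That the two assignments $D \mapsto u_D$ and $u \mapsto D_u$ are mutually inverse then follows from the counit axioms $(\varepsilon \otimes \Id)\Delta = \Id = (\Id \otimes \varepsilon)\Delta$, combined with translation-invariance for the identity $D_{u_D} = D$.

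Finally, the product on $U(G)$ is by definition $(u \cdot v)(a) = (u \otimes v)\Delta(a)$, and a short application of coassociativity gives $D_u \circ D_v = D_{u \cdot v}$, so the bijection is an isomorphism of $\OO_K$-algebras. I do not anticipate any serious obstacle: the only point requiring a little care is the compatibility of Katz' conventions for $\Diff(G)$ with the Hopf-algebra formalism above, after which the verification reduces to routine manipulations of $\Delta$ and $\varepsilon$.
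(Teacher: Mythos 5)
Your Hopf--algebraic argument is sound in substance and takes a genuinely different route from the paper. The paper's proof works with the explicit dual basis $u_n$ of $U(G)$ (where $u_n(\sum_i b_i X^i)=b_n$), computes the structure constants $u_n\cdot u_m=\sum_k s_{k,n,m}u_k$ from $(X\oplus_G Y)^k=\sum_{n,m} s_{k,n,m}X^nY^m$, and simply observes that these agree with the structure constants of $\Diff(G)$ recorded in (1.2) of \cite{Kat81}; this is short because it outsources the identification of $\Diff(G)$ to Katz's formula. Your coordinate-free version --- $D\mapsto\varepsilon\circ D$ with inverse $u\mapsto(\Id\otimes u)\circ\Delta$, the counit axioms giving $u_{D_u}=u$, translation-invariance giving $D_{u_D}=D$, and coassociativity giving multiplicativity --- proves the same statement intrinsically, makes the Cartier-duality content visible, and does not depend on Katz's computation. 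Both are standard and both work.

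One step needs repair, and it touches the only point where the lemma has real content in mixed characteristic. Over $\OO_K$ it is \emph{not} true that a differential operator of order $n$ on $\OO_K\dcroc{X}$ can be written as $\sum_{k=0}^n a_k(X)\partial^k$: the divided powers $\partial^k/k!$ are operators of order $k$ that are not of this form once $k\geq p$, and they are exactly the elements that make $\Diff(G)$ interesting (your own $D_{u_p}$ has leading term $\partial^p/p!$). If your parenthetical description were taken as the definition of $\Diff(G)$, the map $u\mapsto D_u$ would not land in it and the lemma would be false. The fix is to use Katz's definition (equivalently Grothendieck's inductive one, order $\leq n$ meaning that $n+1$ iterated commutators with multiplication operators vanish). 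With that definition, $D(I^{n+1})\subset I$ still holds for $D$ of order $\leq n$, since for $b_0\in I$ one has $\varepsilon(D(b_0c))=\varepsilon([D,b_0](c))$ and one peels off the $n+1$ factors one at a time; and conversely $D_u$ has order $\leq n$ when $u$ kills $I^{n+1}$, because each commutator $[D_u,b]$ inserts a factor $b(X\oplus_G Y)-b(X)$, which lies in the ideal generated by $Y$, so $n+1$ iterated commutators produce something killed by $u$. After this adjustment the rest of your argument goes through as written.
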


\begin{proof}
The proof is the same as that in \S 3.2 of \cite{AB24} for the Lubin--Tate case: for $n \geq 0$, let $u_n \in U(G)$ be the map $u_n : A(G) \to \OO_K$ given by $u_n(\sum_{i \geq 0} b_i X^i)=b_n$. We have $u_n \cdot u_m = \sum_{k \geq 0} s_{k,n,m} u_k$ where $(X \oplus_G Y)^k = \sum_{n,m \geq 0} s_{k,n,m} X^n Y^m$ for $k \geq 0$, and we get the same structure constants as in $\Diff(G)$, see for instance (1.2) of \cite{Kat81}.
\end{proof}

Recall that $\hatU(G)$ denotes the set of $\OO_K$-linear maps $A(G) \to \OO_K$ that are continuous for the $(p,X)$-adic topology, so that $\hatU(G)$ is the $p$-adic completion of $U(G)$. The evaluation pairing $\pscal{\cdot,\cdot} : \hatU(G) \times A(G) \to \OO_K$  extends to $\pscal{\cdot,\cdot} : \hatU(G) \times (\OO_{\Cp} \wotimes A(G)) \to \OO_{\Cp}$.

\begin{defi}
\label{defkatz}
The Katz map is the map $\calK : \hatU(G) \to C^0_{\Gal}(\tph, \OO_{\Cp})$ defined by $\calK(u) (t) = \pscal{u, t(X)}$. 
\end{defi}

The map $\calK(u) : \tph \to \OO_{\Cp}$ is Galois continuous by lemma \ref{galth}. The Katz map $\calK$ is an $\OO_K$-algebra homomorphism (the proof is the same as that of lemma 3.3.5 of \cite{AB24}). Let $\phi_C : C^0_{\Gal}(\tph, \OO_{\Cp}) \to C^0_{\Gal}(\tph, \OO_{\Cp})$ denote the map given by $(\phi_C f)(t) = f(pt)$. We have a map $U(\phi_G) : \hatU(G) \to \hatU(G)$ coming by duality from the map $\phi_G : A(G) \to A(G)$ given by $a(X) \mapsto a([p]_G(X))$.

\begin{lemm}
\label{phikatz}
We have $\calK \circ U(\phi_G) = \phi_C \circ \calK$.
\end{lemm}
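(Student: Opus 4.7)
The plan is to prove the identity by evaluating both sides at an arbitrary $u \in \hatU(G)$ and then at an arbitrary $t \in \tph$, using the definitions, and reducing to the identity $t([p]_G(X)) = (pt)(X)$ in $\OO_{\Cp} \dcroc{X}$.

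First I would unwind the left-hand side. By definition of $\calK$ we have $\calK(U(\phi_G)(u))(t) = \pscal{U(\phi_G)(u), t(X)}$. By definition, $U(\phi_G)$ is the dual of the continuous $\OO_K$-linear map $\phi_G : A(G) \to A(G)$, so for every $a(X) \in A(G)$ we have $\pscal{U(\phi_G)(u), a(X)} = \pscal{u, a([p]_G(X))}$. Extending by $\OO_{\Cp}$-linearity to the pairing $\hatU(G) \times (\OO_{\Cp} \wotimes A(G)) \to \OO_{\Cp}$ and applying this with $a(X) = t(X)$ gives $\calK(U(\phi_G)(u))(t) = \pscal{u, t([p]_G(X))}$.

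Next I would unwind the right-hand side. By definition of $\phi_C$ and of $\calK$, $(\phi_C \circ \calK)(u)(t) = \calK(u)(pt) = \pscal{u, (pt)(X)}$. The identity thus reduces to the equality $t([p]_G(X)) = (pt)(X)$ in $\OO_{\Cp} \dcroc{X}$. This follows from the fact that the $G_K$-equivariant identification $\tph = \Hom_{\OO_{\Cp}}(G, \Gm)$ is $\ZZ_p$-linear: multiplication by $p$ on $\tph$ corresponds to multiplication by $p$ in $\Hom_{\OO_{\Cp}}(G, \Gm)$, which, since every element of $\Hom$ is a group homomorphism, equals pre-composition with $[p]_G$. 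In terms of the power series that represents the homomorphism, this says $(pt)(X) = t([p]_G(X))$, which finishes the proof.

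There is no real obstacle here; the statement is a compatibility of duals and the proof is a formal unwinding of definitions. The only point to check carefully is the description of the action of $p$ on $\tph$ under the identification $\tph = \Hom_{\OO_{\Cp}}(G,\Gm)$, but this is automatic from the way the identification is set up.
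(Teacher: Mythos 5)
Your proof is correct and is essentially the argument the paper intends: the paper simply cites Lemma 3.3.6 of [AB24], which is the same formal unwinding — reduce both sides to the pairing of $u$ against $t([p]_G(X))$ and $(pt)(X)$ respectively, and use that $(pt)(X)=t(X)^p=t([p]_G(X))$ because $t$ is a homomorphism $G\to\Gm$. Your attention to extending the duality relation $\pscal{U(\phi_G)u,a}=\pscal{u,\phi_G(a)}$ from $A(G)$ to $\OO_{\Cp}\wotimes A(G)$ is the right point to flag, and it goes through by $\OO_{\Cp}$-linearity and continuity.
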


\begin{proof}
The proof is the same as that of lemma 3.3.6 of \cite{AB24}, where $U(\phi_G)$ is denoted by $\phi^*$.
\end{proof}

\begin{rema}
\label{bprime}
Since $\hatU(G) = \{ \sum_{n \geq 0} \lambda_n u_n$ with $\{ \lambda_n\}_{n \geq 0} \in c^0(\OO_K) \}$, and $u_n(t(X))= a_n(t)$,
we can reformulate theorem B as follows. 
\begin{enumerate}
\item If $t \in \tphx$,  every $x \in \OO_{K_\infty}$ can be written as $x = \sum_{n \geq 0} \lambda_n a_n(t)$ with $\{ \lambda_n\} \in c^0(\OO_K)$;
\item If $\{ \lambda_n\} \in c^0(\OO_K)$ and $\sum_{n \geq 0} \lambda_n a_n(t) = 0$ for all $t \in \tph$, then $\lambda_n = 0$ for all $n$.
\end{enumerate}
\end{rema}

\section{Surjectivity}

We now prove that $\calK_t : \hatU(G) \to \OO_{K_\infty}$ is surjective if $t = (t_0,t_1, \hdots) \in \tphx$. 

For $n \geq 0$, let $G_n=G[p^n]$ and $H_n=H[p^n]$ and recall that $K_n = K(t_n)$. The inclusion $G_n \to G_{n+1}$ is (\S 2.3 of \cite{Tat67}) the Cartier dual of $[p]_H : H_{n+1} \to H_n$ (and vice versa). We have $A(G_n) = \OO_K\dcroc{X}/\phi_G^n(X)$. Let $U(G_n) = \Hom_{\OO_K}(A(G_n),\OO_K) \subset \hatU(G)$. Cartier duality gives an isomorphism $U(G_n) = A(H_n)$, so that $U(G_n) = \OO_K\dcroc{Y_n}/\phi_H^n(Y_n)$. On $U(G_n)$, we have $U([p]_G) = [p]_H$ so that $U(\phi_G) = \phi_H$. The natural inclusion $U(G_n) \to U(G_{n+1})$ is the map $A(H_n) \to A(H_{n+1})$ that comes from $[p]_H : H_{n+1} \to H_n$. Its image is $\phi_H(U(G_{n+1}))$. 
 If $U(G_n)_K = K \otimes_{\OO_K} U(G_n)$, then 
\[ U(G_n)_K =  K \otimes_{\OO_K} \OO_K\dcroc{Y_n}/\phi_H^n(Y_n) = K_n \times K_{n-1} \times \cdots \times K_1 \times K_0. \] 

Fix $t \in \tphx$. For $n \geq 0$, let $\kappa_n : \hatU(G) \to \OO_{K_\infty}$ denote the map $u \mapsto \calK(u)(p^n t)$. Lemma \ref{phikatz} implies that $\kappa_n(\phi_H(u)) = \kappa_{n+1}(u)$ if $u \in U(G_m)$ for some $m, n \geq 0$.

\begin{prop}
\label{katsurj}
For all $n \geq 1$, the image of the map $\kappa_0 : U(G_n)_K \to K_\infty$ is $K_n$.
\end{prop}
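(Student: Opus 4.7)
The plan is to identify $\kappa_0$ restricted to $U(G_n)$ with the ``evaluate at $t_n$'' map from $A(H_n)$ to $\OO_{\Cp}$ via the Cartier duality isomorphism $U(G_n) = A(H_n)$, and then read off the image from the product decomposition $U(G_n)_K = K_n \times K_{n-1} \times \cdots \times K_0$ already recorded in the paper.

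The first step is to verify, for $u \in U(G_n)$ with Cartier image $\tilde u \in A(H_n)$, the identity $\kappa_0(u) = \tilde u(t_n)$, where $\tilde u$ is evaluated at the $\OO_{\Cp}$-point $t_n$ of $H_n$. Since $u$ factors through $A(G_n) = A(G)/([p^n]_G(X))$, the value $\pscal{u, t(X)}$ depends only on $t_n(X) := t(X) \bmod [p^n]_G(X)$, and this reduction is precisely the character $G_n \to \mu_{p^n}$ of $t_n \in H_n = \Hom(G_n, \Gm)$. Writing the universal Cartier pairing as an element $T \in A(G_n) \wotimes A(H_n)$, we have $t_n(X) = (\Id \otimes t_n)(T)$ and $\tilde u = (u \otimes \Id)(T)$, so
\[ \pscal{u, t_n(X)} = (u \otimes t_n)(T) = t_n\bigl((u \otimes \Id)(T)\bigr) = \tilde u(t_n). \]

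The second step uses the factorization $[p^n]_H(Y_n) = \prod_{k=0}^n P_k(Y_n)$ (up to a unit) coming from the Weierstrass preparation of the Eisenstein factors of $[p^k]_H/[p^{k-1}]_H$ in the proof of lemma \ref{galsurj}. Each $P_k$ is irreducible over $K$ with roots the primitive $p^k$-torsion points of $H$, and the Chinese Remainder Theorem produces the decomposition $U(G_n)_K = \prod_{k=0}^n K[Y_n]/P_k(Y_n) = \prod_{k=0}^n K_k$. Because $t \in \tphx$, the point $t_n$ is a primitive $p^n$-torsion element of $H$, hence a root of $P_n$ but of no other $P_k$. Therefore evaluation at $t_n$ kills every factor except $K_n$ and restricts on that factor to the tautological $K$-algebra isomorphism $K[Y_n]/P_n(Y_n) \cong K(t_n) = K_n$, yielding $\kappa_0(U(G_n)_K) = K_n$.

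The main obstacle is the compatibility identity of the first step. It is essentially bookkeeping once the Cartier duality isomorphism $U(G_n) = A(H_n)$ is made explicit via the universal element $T \in A(G_n) \wotimes A(H_n)$, but one must verify that this isomorphism intertwines the Katz evaluation pairing $\pscal{\cdot,\cdot}$ with the function-value pairing of $A(H_n)$ at the $\OO_{\Cp}$-points of $H_n$.
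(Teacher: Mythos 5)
Your proof is correct, but the decisive step is genuinely different from the paper's. For the containment $\kappa_0(U(G_n)_K) \subseteq K_n$, both arguments start from the same identification of $\kappa_0|_{U(G_n)_K}$ with evaluation at $t_n$ under Cartier duality (your universal-element computation is the explicit form of what the paper does); the paper then concludes via Galois invariance and Ax--Sen--Tate, whereas you get it for free since evaluation at a root $y$ of $P_n$ has image exactly $K[y]=K(t_n)=K_n$. The real divergence is in the "onto" direction: the paper exhibits an explicit functional $w_{p^{2n-1}}$ built from Weierstrass division and shows $\kappa_0(w_{p^{2n-1}}) \equiv a_{p^{2n-1}}(t) \bmod p$ is a uniformizer of $K_n$, which requires the Newton-polygon computation of prop \ref{katzval}; you instead use the semisimple decomposition $U(G_n)_K = \prod_k K[Y_n]/P_k$ coming from the Eisenstein factors of lemma \ref{galsurj} and the fact that $t_n$ has exact order $p^n$, so evaluation at $t_n$ is precisely the projection onto the $K_n$ factor. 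Your route is more structural, makes prop \ref{katzval} unnecessary for this proposition, and in fact identifies the evaluation point $u_{n-j}$ of prop \ref{katzum} directly as the coordinate of $t_{n-j}$ (so the paper's indirect argument there that $u_1 \neq 0$ could be short-circuited); since $t_n$ is a root of the Eisenstein polynomial $P_n$ of degree $[K_n:K]$, it is a uniformizer of $K_n$ and your argument even yields the integral statement of coro \ref{katsurjint}. What the paper's approach buys in exchange is the explicit valuation formula $\vp(a_{p^m}(t)) = 1/p^{m-1}(q-1)$ (Katz's lemma 1), which is of independent interest, and a concrete integral element of $U(G_n)$ realizing the uniformizer.
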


\begin{proof}
We first prove that the image of $\kappa_0 : U(G_n)_K \to K_\infty$ is contained in $K_n$. By Cartier duality, we have $H_n = \Hom_{\OO_{\Cp}}(G_n,\Gm)$, and the map $t_n(X) \in \Hom_{\OO_{\Cp}}(G_n,\Gm)$ corresponding to $t_n$ is given by $t(X) \bmod{\phi_G^n(X)}$. We then have 
\[ t_n(X) \in \OO_{\Cp} \otimes A(G_n) = \Hom_{\OO_{\Cp}} ( \OO_{\Cp} \otimes U(G_n) , \OO_{\Cp}) \]
The resulting map $U(G_n)_K \to \Cp$ is given by $u \mapsto u(t(X) \bmod{\phi_G^n(X)}) = u(t)$, namely the restriction of $\kappa_0$ to $U(G_n)_K$. For all $\sigma \in G_{K_n}$, $\sigma(t_n) = t_n$ and hence $\sigma(t_n(X)) = t_n(X)$ so that $\sigma(\kappa_0(u)) = \kappa_0(u)$ if $u \in U(G_n)_K$. By the Ax-Sen-Tate theorem, $\kappa_0(u) \in K_n$.

We now prove that the image of $\kappa_0$ is $K_n$. Let $[p^n]_G(X) = f(X) u(X)$ be the Weierstrass factorization of $[p^n]_G(X)$, where $f(X)$ is a distinguished polynomial of degree $q^n$ and $u(X)$ is a unit. Thanks to Weierstrass division by $f(X)$, we can write 
\[ \OO_K \dcroc{X} = (\oplus_{i=0}^{q^n-1} \OO_K X^i) \oplus f(X) \OO_K \dcroc{X}. \]
For $0 \leq r \leq q^n-1$, let $w_r \in U(G_n)_K$ be the $\OO_K$-linear form that maps $X^i$ to $\delta_{ir}$ for $i \leq q^n-1$ and maps $f(X) \OO_K \dcroc{X}$ to $0$. It maps $X^{j+q^n}$ to $p \OO_K$ for all $j \geq 0$ since $f(X) = f_{q^n} X^{q^n} +p g(X)$ with $f_{q^n} \in \OO_K^\times$ and hence $f_{q^n} X^{j+q^n} = X^j f(X) - p X^j g(X)$. 

By prop \ref{katzval}, we have $\vp(a_{p^m}(t)) = 1/p^{m-1}(q-1)$ for all $m \geq 0$. In particular, setting $m=2n-1$, so that $p^m=q^n/p \leq q^n-1$, we have  $\vp(a_{p^m}(t)) = 1/q^{n-1}(q-1)$. We have \[ \kappa_0(w_{p^{2n-1}}) = w_{p^{2n-1}}(t(X)) \equiv a_{p^m}(t) \bmod{p}, \] so that $\kappa_0(U(G_n)_K)$ contains a uniformizer of $K_n$ and is therefore equal to $K_n$.
\end{proof}

\begin{rema}
\label{surjab}
Compare with (the proof of) prop 3.6.7 of \cite{AB24}.
\end{rema}

\begin{prop}
\label{katzum}
Take $n \geq 1$ and fix an isomorphism $U(G_n) = \OO_K\dcroc{Y}/\phi_H^n(Y)$. 

There exists $(u_n,u_{n-1},\hdots,u_1,u_0)$ with $u_k \in K_\infty$ and $[p]_H(u_k) = u_{k-1}$ for $1 \leq k \leq n$ and $u_1 \neq 0$ and $u_0=0$ such that the map $\kappa_j : U(G_n)_K \to K_\infty$ is given by $P(Y) \mapsto P(u_{n-j})$ if $j \leq n$, and by $P(Y) \mapsto P(0)$ if $j \geq n$.
\end{prop}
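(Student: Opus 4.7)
The plan is to identify each $\kappa_j$ as an evaluation map at a point of $H_n$, with $u_k := t_k$ being the correct choice. Under the isomorphism $U(G_n) \simeq A(H_n) = \OO_K\dcroc{Y}/\phi_H^n(Y)$, an element $P(Y)$ corresponds to an $\OO_K$-linear functional on $A(G_n)$. Meanwhile, the reduction $t(X) \bmod \phi_G^n(X) \in \Hom_{\OO_{\Cp}}(G_n, \mu_{p^n})$ corresponds, via Cartier duality for finite flat group schemes, to the point $t_n \in H_n(\OO_{\Cp})$; these points satisfy $[p]_H(t_k) = t_{k-1}$, and $t_k \in K_k \subset K_\infty$.

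The first substantive step is to show that $\kappa_0(P(Y)) = P(t_n)$. From the proof of proposition \ref{katsurj} we already know that the restriction of $\kappa_0$ to $U(G_n)_K$ sends $P(Y)$ to $P(Y)(t(X))$, where $P(Y)$ is viewed as an $\OO_K$-linear functional on $A(G_n)$ via the isomorphism $U(G_n) \simeq A(H_n)$. The defining property of Cartier duality is precisely that this pairing coincides with evaluating $P(Y) \in A(H_n)$ at the point $t_n \in H_n(\OO_{\Cp})$.

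For $j \geq 1$, I would induct using lemma \ref{phikatz}, together with the fact (recalled in the paper) that on $U(G_n)$ the operator $U(\phi_G)$ acts as $\phi_H$, and hence sends $P(Y)$ to $P([p]_H(Y))$. Assuming $\kappa_j(P(Y)) = P(u_{n-j})$, lemma \ref{phikatz} yields
\[ \kappa_{j+1}(P(Y)) = \calK(P(Y))(p^{j+1}t) = (\phi_C \calK(P(Y)))(p^j t) = \kappa_j(P([p]_H(Y))) = P([p]_H(u_{n-j})). \]
Setting $u_k := t_k$ makes $[p]_H(u_{n-j}) = u_{n-j-1}$, which keeps the formula valid down to $u_0 = 0$, after which $P(0)$ is reproduced for every $j \geq n$. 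All other conditions are immediate: $u_k \in K_k \subset K_\infty$, $[p]_H(u_k) = u_{k-1}$, and $u_1 = t_1 \neq 0$ because $t \in \tphx$ is equivalent to $t_1 \neq 0$ (otherwise the shifted sequence $(t_1, t_2, \ldots)$ would define $s \in \tph$ with $p \cdot s = t$).

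The only delicate point is the Cartier-duality identification $\kappa_0(P(Y)) = P(t_n)$, which requires carefully unwinding how the isomorphism $U(G_n) \simeq A(H_n)$ intertwines the evaluation pairing with evaluation at the point of $H_n$ dual to $t_n(X)$. Once this is made precise, the inductive step using lemma \ref{phikatz} and the verification of the properties of the $u_k$ are routine.
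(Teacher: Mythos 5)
Your proof is correct, and it takes a somewhat different route from the paper's. The paper never identifies the $u_k$ explicitly: it observes that each $\kappa_j$ is a $K$-algebra homomorphism from $U(G_n)_K = K_n \times \cdots \times K_0$ to $K_\infty$, hence must be evaluation at some root $u_{n-j}$ of $\phi_H^n(Y)$ in $\MM_{K_\infty}$; the relation $[p]_H(u_{n-j+1}) = u_{n-j}$ then follows from $\kappa_{j-1}(\phi_H(Y)) = \kappa_j(Y)$ (the same use of lemma \ref{phikatz} as yours), $u_0 = 0$ from $\kappa_n(Y) = \kappa_0(\phi_H^n(Y)) = 0$, and --- this is the main divergence --- $u_1 \neq 0$ from the surjectivity of $\kappa_0 : U(G_1)_K \to K_1$ established in prop \ref{katsurj}, which forces $u_1$ to generate $K_1$ over $K$. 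You instead pin down $u_k = t_k$ by unwinding Cartier duality at $j = 0$ and propagating by induction; the unwinding is legitimate and is essentially already present in the proof of prop \ref{katsurj}, since $\kappa_0|_{U(G_n)}$ is the pairing against the group-like element $t_n(X) = t(X) \bmod \phi_G^n(X)$, and a group-like element of $\OO_{\Cp} \otimes A(G_n)$ is exactly an $\OO_{\Cp}$-algebra homomorphism on $\OO_{\Cp} \otimes U(G_n) = \OO_{\Cp} \otimes A(H_n)$, i.e.\ evaluation at the coordinate of the point $t_n$. What your version buys is a concrete identification of the $u_k$ and a cheaper proof of $u_1 \neq 0$ (your observation that $t \in \tphx$ is equivalent to $t_1 \neq 0$ is correct and bypasses prop \ref{katsurj} entirely); what the paper's abstract argument buys is that one only ever invokes the algebra-homomorphism property of $\calK$, never the explicit duality. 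One shared caveat: both arguments tacitly take the ``fixed isomorphism'' $U(G_n) = \OO_K\dcroc{Y}/\phi_H^n(Y)$ to be the Cartier-duality one, under which $U(\phi_G)$ acts as $P(Y) \mapsto P([p]_H(Y))$; for a genuinely arbitrary isomorphism the stated formula for $\kappa_j$ would have to be transported accordingly.
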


\begin{proof}
For all $j$, there exists a root $u_{n-j}$ of $\phi_H^n(Y)$ in $\MM_{K_\infty}$ such that the $K$-algebra map $\kappa_j : K \otimes_{\OO_K} \OO_K\dcroc{Y}/\phi_H^n(Y) \to K_\infty$ is given by $P(Y) \mapsto P(u_{n-j})$. In addition, we have $\kappa_{j-1} ( \phi_H(Y) ) = \kappa_j(Y)$ so that $[p]_H(u_{n-j+1}) = u_{n-j}$. 

We have $\kappa_n(Y) = \kappa_0 ( \phi_H^n(Y) ) = 0$ so that $u_0=0$, and finally since $\kappa_0 : U(G_1)_K \to K_1$ is surjective by prop \ref{katsurj}, and 
\[ \kappa_{n-1}(U(G_n)_K) = \phi_C^{n-1} \circ \kappa_0(U(G_n)_K) = \kappa_0(\phi_H^{n-1}(U(G_n))_K) = \kappa_0(U(G_1)_K), \]
the element $u_1$ generates $K_1$ over $K$ and so $u_1 \neq 0$.
\end{proof}

\begin{coro}
\label{katsurjint}
The map $\kappa_0 : U(G_n) \to \OO_{K_n}$ is surjective for all $n \geq 0$.
\end{coro}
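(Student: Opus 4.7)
The plan is to upgrade proposition \ref{katsurj} from $K$-linear to $\OO_K$-integral surjectivity by observing that the element $w_{p^{2n-1}}$ constructed in its proof is already integral and maps to a uniformizer of $K_n$, then invoking that $\OO_{K_n}$ is generated over $\OO_K$ by any uniformizer.

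The form $w_{p^{2n-1}}$, defined to send $X^i \mapsto \delta_{i, p^{2n-1}}$ for $i \leq q^n - 1$ and kill $f(X)\OO_K\dcroc{X}$, takes values in $\OO_K$ on the $\OO_K$-basis of $A(G_n) = \oplus_{i=0}^{q^n-1} \OO_K X^i$, so $w_{p^{2n-1}} \in U(G_n)$, not merely in $U(G_n)_K$. The computation in the proof of prop \ref{katsurj} gives $\kappa_0(w_{p^{2n-1}}) \equiv a_{p^{2n-1}}(t) \pmod p$; by prop \ref{katzval}, $\vp(a_{p^{2n-1}}(t)) = 1/(q^n - q^{n-1}) < 1$, so $\kappa_0(w_{p^{2n-1}})$ has this same valuation and lies in $K_n \cap \OO_{\Cp} = \OO_{K_n}$. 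The same Weierstrass/Eisenstein argument as in lemma \ref{galsurj}, applied to $[p^n]_H(X)/[p^{n-1}]_H(X)$, shows $K_n/K$ is totally ramified of degree $q^n - q^{n-1}$, so $\kappa_0(w_{p^{2n-1}})$ is in fact a uniformizer of $K_n$.

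Finally, $\kappa_0$ restricts to an $\OO_K$-algebra homomorphism $U(G_n) \to \OO_{K_n}$ because $\calK$ is a ring map and evaluation at $t$ is multiplicative. Since $K_n/K$ is totally ramified, $\OO_{K_n} = \OO_K[\pi]$ for any uniformizer $\pi$; taking $\pi = \kappa_0(w_{p^{2n-1}})$ gives $\OO_{K_n} = \OO_K[\kappa_0(w_{p^{2n-1}})] \subseteq \kappa_0(U(G_n)) \subseteq \OO_{K_n}$, yielding surjectivity. The real work has already been done in propositions \ref{katsurj} and \ref{katzval}; here the only new input is the standard fact that a uniformizer generates the ring of integers of a totally ramified extension over $\OO_K$, so there is no serious obstacle.
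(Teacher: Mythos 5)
Your proof is correct, but it takes a different route from the paper's. The paper deduces the corollary in one line from Proposition \ref{katzum}: via Cartier duality, $U(G_n)=\OO_K\dcroc{Y}/\phi_H^n(Y)$ and $\kappa_0$ is evaluation $P(Y)\mapsto P(u_n)$ at the uniformizer $u_n$ of $K_n$, so the image is $\OO_K[u_n]=\OO_{K_n}$. You instead bypass Proposition \ref{katzum} entirely and go back inside the proof of Proposition \ref{katsurj}: you correctly observe that the dual-basis element $w_{p^{2n-1}}$ is already integral (it takes values in $\OO_K$ on the Weierstrass basis and kills $f(X)\OO_K\dcroc{X}=(\phi_G^n(X))$, so it lies in $U(G_n)$, not just $U(G_n)_K$), that its image is a uniformizer of $K_n$ because $\kappa_0(w_{p^{2n-1}})\equiv a_{p^{2n-1}}(t)\bmod p$ with $\vp(a_{p^{2n-1}}(t))=1/q^{n-1}(q-1)<1$ and $K_n/K$ is totally ramified of degree $q^{n-1}(q-1)$ by the Eisenstein argument, and that $\kappa_0(U(G_n))$ is an $\OO_K$-subalgebra of $\OO_{K_n}$ (this uses that $U(G_n)$ is a subalgebra of $\hatU(G)$ and that $\calK_t$ is a ring map, both available in the paper). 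Both arguments ultimately invoke the same standard fact that a uniformizer generates the ring of integers of a totally ramified extension. What your version buys is independence from Proposition \ref{katzum}; what it costs is having to make explicit the multiplicativity of $\kappa_0$ on $U(G_n)$, which the paper gets for free from the identification $U(G_n)=A(H_n)$. (For completeness, note the case $n=0$ is trivial since $U(G_0)=\OO_K$ and $K_0=K$; your element $w_{p^{2n-1}}$ only exists for $n\geq 1$.)
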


\begin{proof}
The map $\kappa_0 : U(G_n) \to \OO_{K_\infty}$ is given by $P(Y_n) \mapsto P(u_n)$ as in prop \ref{katzum}, so that its image is $\OO_{K_n}$ since $u_n$ is a uniformizer of $K_n$.
\end{proof}

\begin{coro}
\label{kappasurj}
The map $\kappa_0 : \hatU(G) \to \OO_{K_\infty}$ is surjective.
\end{coro}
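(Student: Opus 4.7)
The plan is to derive this from corollary \ref{katsurjint} by a standard $p$-adic approximation/completion argument.

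Three ingredients are needed: (i) $\hatU(G)$ is the $p$-adic completion of $U(G) = \bigcup_{n \geq 0} U(G_n)$ by construction; (ii) $\OO_{K_\infty}$ is the $p$-adic completion of $\bigcup_{n \geq 0} \OO_{K_n}$, which is immediate from the definition of $K_\infty$ as the completion of $\bigcup_n K_n$ for the $p$-adic valuation; (iii) the map $\kappa_0 : \hatU(G) \to \OO_{K_\infty}$ is $p$-adically continuous. Point (iii) follows from remark \ref{bprime}: writing $u = \sum_n \lambda_n u_n$ with $\lambda_n \in \OO_K$ tending to $0$, one has $\kappa_0(u) = \sum_n \lambda_n a_n(t) \in \OO_{\Cp}$, so $\kappa_0$ sends $p^k \hatU(G)$ into $p^k \OO_{\Cp} \cap \OO_{K_\infty} = p^k \OO_{K_\infty}$.

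Given $x \in \OO_{K_\infty}$, I would then construct $u \in \hatU(G)$ as a $p$-adically convergent series $u = \sum_{i \geq 0} p^i v_i$ by successive approximation. Set $x_0 = x$; having chosen $x_i \in \OO_{K_\infty}$, use (ii) to pick some $y_i \in \OO_{K_{n_i}}$ with $x_i \equiv y_i \pmod{p \OO_{K_\infty}}$, invoke corollary \ref{katsurjint} to obtain $v_i \in U(G_{n_i}) \subset \hatU(G)$ with $\kappa_0(v_i) = y_i$, and set $x_{i+1} = (x_i - y_i)/p \in \OO_{K_\infty}$. The resulting series converges in the $p$-adically complete ring $\hatU(G)$, and continuity of $\kappa_0$ gives $\kappa_0(u) = \sum_i p^i y_i = x$.

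I do not foresee any serious obstacle: the statement is essentially ``surjectivity of $U(G_n) \to \OO_{K_n}$ passes to $p$-adic completions'', and all of the genuine work has already been done in proposition \ref{katsurj} and corollary \ref{katsurjint}. The only mild point of care is verifying (ii) and (iii) explicitly, both of which are short.
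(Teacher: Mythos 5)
Your argument is correct and is exactly the $p$-adic approximation/completion argument that the paper leaves implicit (the corollary is stated there without proof, as an immediate consequence of corollary \ref{katsurjint}). One small inaccuracy: $U(G)$ is not literally $\bigcup_n U(G_n)$ (that union is merely $p$-adically dense in $\hatU(G)$, cf.\ lemma \ref{undense}), but your construction only uses the inclusions $U(G_{n_i}) \subset \hatU(G)$ together with the $p$-adic completeness of $\hatU(G)$, so nothing is affected.
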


This proves the surjectivity part of Theorem B since $\kappa_0 = \calK_t$.

\section{Injectivity}

We now prove the injectivity of the Katz map $\calK : \hatU(G) \to C^0_{\Gal}(\tph,\OO_{\Cp})$. Recall that $U(G_n) = \Hom_{\OO_K}(\OO_K\dcroc{X}/\phi_G^n(X),\OO_K)$ and that we have an injection $U(G_n) \to \hatU(G)$.

\begin{lemm}
\label{undense}
The $\OO_K$-module $\cup_{n \geq 1} U(G_n)$ is $p$-adically dense in $\hatU(G)$.
\end{lemm}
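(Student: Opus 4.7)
The plan is to reduce the density to the mod-$p$ statement $\hatU(G) = \cup_n U(G_n) + p \hatU(G)$, and then to construct explicit approximants in $U(G_n)$ using the Weierstrass preparation of $[p^n]_G$. By remark \ref{bprime}, $\hatU(G) = \{\sum_{k \geq 0} \lambda_k u_k : \lambda_k \in \OO_K, \lambda_k \to 0\}$, so $p^N \hatU(G)$ consists of the series with all coefficients in $p^N \OO_K$. Since $[p^m]_G(X) \in [p^n]_G(X) \OO_K\dcroc{X}$ for $n \leq m$, we have $U(G_n) \subseteq U(G_m)$, and a routine induction on $N$ shows that $\hatU(G) = \cup_n U(G_n) + p \hatU(G)$ implies $\hatU(G) = \cup_n U(G_n) + p^N \hatU(G)$ for every $N \geq 1$, which is the desired density. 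It therefore suffices to approximate any $u \in \hatU(G)$ modulo $p\hatU(G)$ by an element of some $U(G_n)$.

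Given $u = \sum_k \lambda_k u_k$, I would pick $M$ so that $\lambda_k \in p\OO_K$ for all $k \geq M$, and then choose $n$ with $q^n > M$. Using the Weierstrass factorization $[p^n]_G(X) = f_n(X) \cdot \mathrm{unit}$ with $f_n(X) = X^{q^n} + p g_n(X)$ distinguished of degree $q^n$ (as in the proof of prop \ref{katsurj}), I would define $v \in U(G_n) = \Hom_{\OO_K}(\OO_K\dcroc{X}/f_n, \OO_K)$ by setting $v(X^k) = \lambda_k$ for $0 \leq k < M$ and $v(X^k) = 0$ for $M \leq k < q^n$; the values of $v$ on $X^k$ with $k \geq q^n$ are then forced by Weierstrass reduction modulo $f_n$. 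It then remains to check that $(u-v)(X^k) \in p\OO_K$ for every $k$: this difference vanishes for $k < M$, equals $\lambda_k \in p\OO_K$ for $M \leq k < q^n$, and for $k \geq q^n$ one has $u(X^k) = \lambda_k \in p\OO_K$ (since $k > M$) while $v(X^k) \in p\OO_K$ by the Weierstrass estimate below. Consequently $u - v \in p \hatU(G)$, which finishes the reduction.

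The main technical point is the estimate that in $A(G_n) = \OO_K\dcroc{X}/f_n$, every $X^k$ with $k \geq q^n$ lies in $p \cdot (\oplus_{i=0}^{q^n-1} \OO_K X^i)$. I would prove this by strong induction on $k$ from the identity $X^{q^n} = -p g_n(X)$ in $A(G_n)$: writing $k = j + q^n$ one has $X^k \equiv -p X^j g_n(X)$, where $X^j g_n(X)$ is a polynomial of degree strictly less than $k$, whose Weierstrass reduction has $\OO_K$-coefficients by induction, so that the outer factor of $-p$ puts all coefficients of $X^k$ into $p\OO_K$. This is essentially the same estimate already used at the end of the proof of prop \ref{katsurj}, and is the only non-formal ingredient in the argument.
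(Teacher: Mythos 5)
Your proof is correct, and its core mechanism is the same as the paper's: approximate $u$ by the element of $U(G_n)$ obtained from Weierstrass division by the distinguished factor $f_n$ of $[p^n]_G$, and then control the values of the approximant on the high-degree monomials $X^k$, $k \geq q^n$. The difference lies in the error analysis. The paper works in one shot modulo an arbitrary $p^k$: it chooses $d$ with $u(X^d \OO_K\dcroc{X}) \subset p^k\OO_K$ and then $n$ with $[p^n]_G(X) \in (X^d,p^k)$, and proves $w(X^{j+q^n}) \in p^k\OO_K$ by induction on $j$, tracking that the leftover monomials after one reduction step have degree at least $d+j$. You instead prove the statement modulo $p$ only, where a single factor of $p$ falls out of $f_n(X) \equiv (\mathrm{unit})\, X^{q^n} \bmod p$, and then recover full density by the standard successive-approximation argument, which is legitimate because the $U(G_n)$ are nested so the partial sums $v_1 + p v_2 + \cdots + p^{N-1} v_N$ stay in $\cup_n U(G_n)$. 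Your route trades the paper's slightly delicate choice of $n$ and degree bookkeeping for an extra but entirely routine iteration; both arguments are complete.
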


\begin{proof}
Take $k,d \geq 0$ and $u \in \hatU(G)$ such that $u(X^d \OO_K \dcroc{X}) \subset p^k \OO_K$, and  $n \geq 1$ such that $[p^n]_G(X) \in (X^d,p^k)$. Let $[p^n]_G(X) = f(X) u(X)$ be the Weierstrass factorization of $[p^n]_G(X)$, where $f(X)$ is distinguished of degree $q^n$ and $u(X)$ is a unit. Write $\OO_K \dcroc{X} = (\oplus_{i=0}^{q^n-1} \OO_K X^i) \oplus f(X) \OO_K \dcroc{X}$. Define $w \in U(G_n)$ by $w=u$ on $1,X,\hdots,X^{q^n-1}$ and $w=0$ on $f(X) \OO_K \dcroc{X}$. If $j \geq 0$, then $X^{j+q^n} \in f(X) \OO_K \dcroc{X} +(\oplus_{i=d+j}^{q^n+j-1} \OO_K X^i)  + p^k \OO_K \dcroc{X}$. By induction on $j$, we have $w(X^{j+q^n}) \in p^k \OO_K$ for all $j \geq 0$ and therefore $w-u \in p^k \hatU(G)$.
\end{proof}

The group $G_K$ acts transitively on $\tphx$ by lemma \ref{galsurj}, so that in the isomorphisms $U(G_n) = \OO_K\dcroc{Y_n}/\phi_H^n(Y_n)$, it is possible to change the coordinates $Y_n$ to get a common $u =(\hdots,u_1,u_0)$ for all $U(G_n)_K$ in prop \ref{katzum}. We can then write $U(G_n) = \OO_K \dcroc{ \phi_H^{-n}(Y) } / Y$ for each $n$, with each transition map $U(G_{n-1}) \to U(G_n)$ sending $Y$ to $Y$ and the map $\kappa_j$ sending $\phi_H^{-n}(Y)$ to $u_{n-j}$. Lemma \ref{undense} and the fact that $U(G_n) \cap p \cdot \hatU(G) = p \cdot U(G_n)$ for all $n \geq 0$ imply that $\hatU(G)$ is the $p$-adic completion of $\cup_{n \geq 0} \OO_K \dcroc { \phi_H^{-n}(Y) } / Y$.

Let $\afont$ be the $p$-adic completion of $\cup_{n \geq 0} \OO_K \dcroc { \phi_H^{-n}(Y) }$ and let $\theta : \afont \to \OO_{K_\infty}$ be the $\OO_K$-linear ring homomorphism that sends $\phi_H^{-n}(Y)$ to $u_n$ so that $\hatU(G) = \afont/Y$ and $\kappa_0 = \theta$. 

Let $\efont$ denote the ring $\cup_{n \geq 0} k \dcroc{ \phi_H^{-n}(Y) }$. The valuation $\val_Y$ is compatibly defined on each $k \dcroc{ \phi_H^{-n}(Y) }$ and hence on $\efont$ (indeed, $\phi_H(Y) \in Y^q \cdot k \dcroc{Y}^\times$ so that if $x \in k \dcroc{Y_n} \subset  k \dcroc{Y_{n+1}}$ with $Y_n = \phi_H(Y_{n+1})$, then $\val_{Y_{n+1}}(x) = q \cdot \val_{Y_n}(x)$).

\begin{lemm}
\label{divy}
If $x \in \efont$, then $x \in Y \cdot \efont$ if and only if $\val_Y(x) \geq 1$.
\end{lemm}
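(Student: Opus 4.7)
The ``only if'' direction is immediate: if $x = Y \cdot y$ with $y \in \efont$, then $\val_Y(x) = \val_Y(Y) + \val_Y(y) \geq 1 + 0 = 1$, since $\val_Y$ is multiplicative and takes nonnegative values on $\efont$.

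For the converse, the idea is to move into a single subring $k\dcroc{Y_n} \subset \efont$ large enough to contain $x$, and then divide there. Since $\efont = \cup_{n \geq 0} k\dcroc{\phi_H^{-n}(Y)}$, I would choose $n \geq 0$ such that $x \in k\dcroc{Y_n}$, where $Y_n := \phi_H^{-n}(Y)$, so that $Y = \phi_H^n(Y_n)$. The compatibility of $\val_Y$ on the subrings (which is exactly the relation $\val_{Y_{n+1}} = q \cdot \val_{Y_n}$ recalled just before the lemma) gives $\val_{Y_n}(x) = q^n \cdot \val_Y(x) \geq q^n$.

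The key structural input is the hypothesis $\phi_H(Y) \in Y^q \cdot k\dcroc{Y}^\times$, iterated: it yields $\phi_H^n(Y_n) = Y_n^{q^n} \cdot v$ for some $v \in k\dcroc{Y_n}^\times$, so that $Y = Y_n^{q^n} \cdot v$ inside $k\dcroc{Y_n}$. Now since $\val_{Y_n}(x) \geq q^n$ and $k\dcroc{Y_n}$ is a complete discrete valuation ring with uniformizer $Y_n$, we can write $x = Y_n^{q^n} \cdot z$ for some $z \in k\dcroc{Y_n}$. Multiplying by $v^{-1} \in k\dcroc{Y_n}^\times$ then gives
\[ x = Y_n^{q^n} \cdot z = (Y_n^{q^n} v) \cdot (v^{-1} z) = Y \cdot (v^{-1} z), \]
and $v^{-1} z \in k\dcroc{Y_n} \subset \efont$, as required.

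There is no real obstacle here; the only point one has to be a bit careful about is that the divisibility has to be carried out inside a single subring $k\dcroc{Y_n}$ rather than in the (non-Noetherian) union $\efont$, and this is exactly what allows the use of the unit $v$ to convert divisibility by $Y_n^{q^n}$ into divisibility by $Y$.
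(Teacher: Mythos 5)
Your proof is correct. The paper actually states this lemma without proof, so there is nothing to compare against; your argument — reduce to a single subring $k\dcroc{Y_n}$ containing $x$, use the normalization $\val_{Y_n} = q^n \val_Y$ there, and convert $Y = Y_n^{q^n}\cdot(\text{unit})$ via the iterated relation $\phi_H(Y) \in Y^q \cdot k\dcroc{Y}^\times$ — is exactly the one suggested by the parenthetical remark preceding the lemma, and it is complete.
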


Let $\theta : \efont \to \OO_{K_\infty} / p$ be the $k$-linear ring homomorphism that sends $\phi_H^{-n}(Y)$ to $u_n$ so that $\efont = \afont / p \afont$ with compatible $\theta$.

\begin{lemm}
\label{kerthetet}
We have $\ker(\theta : \efont \to \OO_{K_\infty} / p) = Y/\phi_H^{-1}(Y) \cdot \efont$. 
\end{lemm}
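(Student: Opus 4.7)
I would prove the two inclusions separately, setting $Z_n = \phi_H^{-n}(Y)$ so that $\efont = \bigcup_{n \geq 0} k\dcroc{Z_n}$ and $\phi_H(Z_n) = Z_{n-1}$.

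For $(Y/Z_1) \cdot \efont \subseteq \ker \theta$: since $\phi_H(Y) \in Y^q \cdot k\dcroc{Y}^\times$, substituting $Y \mapsto Z_1$ gives $Y = Z_1^q \cdot v_1$ in $k\dcroc{Z_1}$ for some unit $v_1$, so $Y/Z_1 = Z_1^{q-1} v_1 \in \efont$ makes sense. Then $\theta(Y/Z_1) = u_1^{q-1} \cdot \theta(v_1)$, and the Eisenstein argument from the proof of lemma \ref{galsurj} applied to $[p]_H(X)/X$ gives $\vp(u_1) = 1/(q-1)$, so $u_1^{q-1}$ has $p$-adic valuation $1$ and vanishes modulo $p$.

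For $\ker \theta \subseteq (Y/Z_1) \cdot \efont$: any element of $\ker \theta$ lies in some $k\dcroc{Z_n}$, so it suffices to prove $\ker(\theta|_{k\dcroc{Z_n}}) = (Y/Z_1) \cdot k\dcroc{Z_n}$ for each $n \geq 1$. My approach is a $k$-dimension count on both sides. Since $\val_{Z_n}(Z_k) = q^{n-k}$ for $k \leq n$, we have $\val_{Z_n}(Y/Z_1) = q^n - q^{n-1} = (q-1)q^{n-1}$, and Weierstrass preparation yields $k\dcroc{Z_n}/(Y/Z_1) \cong k[Z_n]/(Z_n^{(q-1)q^{n-1}})$, of $k$-dimension $(q-1)q^{n-1}$. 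On the other side, since $[p]_H(u_k) = u_{k-1}$ and $u_1 \neq 0$, the element $u_n$ lies in $H[p^n] \setminus H[p^{n-1}]$, and by the Eisenstein argument applied to $[p^n]_H/[p^{n-1}]_H$, the extension $K(u_n)/K$ is totally ramified of degree $(q-1)q^{n-1}$ with $u_n$ a uniformizer; hence $\OO_{K(u_n)}/p \cong k[u_n]/(u_n^{(q-1)q^{n-1}})$ also has $k$-dimension $(q-1)q^{n-1}$. The map $\theta|_{k\dcroc{Z_n}}$ factors through a map $k\dcroc{Z_n}/(Y/Z_1) \to \OO_{K(u_n)}/p$ sending $Z_n \mapsto u_n$; it is surjective, so by dimension count it is an isomorphism, giving the claimed equality of kernels.

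The main obstacle will be confirming that the two $k$-dimensions agree on the nose. Both arise from the same Eisenstein polynomial structure of $[p^n]_H/[p^{n-1}]_H$: the $(q-1)q^{n-1}$ on the source side is the $Z_n$-valuation of $Y/Z_1$ computed from $\phi_H(Y) \in Y^q \cdot k\dcroc{Y}^\times$, while on the target it is the ramification index of $K(u_n)/K$ read off from the same Eisenstein factor. This coincidence is essentially the height-two hypothesis on $H$ talking, and it is what makes $(Y/Z_1)$ exactly the right generator of $\ker \theta$.
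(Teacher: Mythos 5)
Your proof is correct and follows essentially the same route as the paper's: both reduce to each $k\dcroc{\phi_H^{-n}(Y)}$ and rest on the coincidence that $\val_{Z_n}(Y/\phi_H^{-1}(Y)) = q^{n-1}(q-1)$ equals the degree of the (Eisenstein) minimal polynomial of $u_n$ over $K$. The only difference is packaging --- the paper deduces injectivity of $k[Z_n]/(Z_n^{q^{n-1}(q-1)}) \to \OO_{K_n}/p$ directly via Weierstrass division and the valuation $\vp(u_n)=1/q^{n-1}(q-1)$, whereas you get it from surjectivity (i.e.\ $\OO_{K_n}=\OO_K[u_n]$) plus an equality of $k$-dimensions --- and these are interchangeable.
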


\begin{proof}
It is enough to prove that for all $n \geq 1$, $\ker(\theta : k \dcroc{ \phi_H^{-n}(Y) } \to \OO_{K_\infty} / p)$ is generated by $Y/\phi_H^{-1}(Y)$. Let $Q_n(X)$ be the minimal polynomial of $u_n$ over $K$, so that $Q_n(X) \in \OO_K[X]$ is monic of degree $q^{n-1}(q-1)$. If $P(X) \in \OO_K \dcroc{X}$, we can write it as $P=SQ_n+R$ with $\deg R < q^{n-1}(q-1)$. If $P(u_n) \in p \cdot \OO_{K_n}$, then $R(u_n) \in p \cdot \OO_{K_n}$ and since $\vp(u_n) = 1/q^{n-1}(q-1)$, this implies that $R(X) \in p \cdot \OO_K[X]$. 
The claim now follows from this, and from the fact that $\theta(\phi_H^{-n}(Y)) = u_n$ so that $\ker(\theta)$ is generated by $\phi_H^{-n}(Y)^{q^{n-1}(q-1)}$ and hence by $Y/\phi_H^{-1}(Y)$ since $\phi_H(Y) = Y^q \cdot f(Y)$ with $f(Y) \in k \dcroc{Y}^\times$.
\end{proof}

\begin{prop}
\label{kerthet}
We have $\ker(\theta : \afont \to \OO_{K_\infty}) = Y/\phi_H^{-1}(Y) \cdot \afont$. 
\end{prop}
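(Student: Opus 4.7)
The plan is to bootstrap the characteristic-$p$ identification of Lemma \ref{kerthetet} to characteristic zero via $p$-adic successive approximation. First I would make sense of $\pi := Y/\phi_H^{-1}(Y)$ as an honest element of $\afont$: writing $Z = \phi_H^{-1}(Y)$, the factorisation $\phi_H(Z) = Z \cdot g(Z)$ for some $g(Z) \in \OO_K \dcroc{Z}$ with $g(0) = p$ realises $\pi$ as $g(Z) \in \OO_K\dcroc{Z} \subset \afont$. Then $u_1 \cdot g(u_1) = \phi_H(u_1) = u_0 = 0$, and since $u_1 \neq 0$ in the integral domain $\OO_{K_\infty}$, one gets $\theta(\pi) = 0$; this yields the easy inclusion $\pi \afont \subset \ker(\theta)$.

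For the reverse inclusion, I would fix $x \in \ker(\theta)$ and apply Lemma \ref{kerthetet} to the image $\bar x \in \efont$ to produce $y_0 \in \afont$ with $x - \pi y_0 \in p \afont$. Writing $x - \pi y_0 = p x_1$ and applying $\theta$, one obtains $p \theta(x_1) = 0$ in $\OO_{K_\infty}$, so $\theta(x_1) = 0$ by $p$-torsion freeness of $\OO_{K_\infty}$. Iterating this construction produces sequences $(x_n) \subset \ker(\theta)$ and $(y_n) \subset \afont$ satisfying $x_n = \pi y_n + p x_{n+1}$ for all $n \geq 0$.

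Telescoping would then give $x = \pi \sum_{i=0}^{N} p^i y_i + p^{N+1} x_{N+1}$ for every $N \geq 0$. Since $\afont$ is by construction $p$-adically complete and separated, the partial sums converge to some $y = \sum_{i \geq 0} p^i y_i \in \afont$, the remainder $p^{N+1} x_{N+1}$ tends to $0$, and hence $x = \pi y \in \pi \afont$.

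The hard part is really the mod-$p$ computation already performed in Lemma \ref{kerthetet}; once that is in hand, the passage to characteristic zero is a textbook $p$-adic devissage. Worth noting is that the argument does not require $\afont$ itself to be $p$-torsion free---only $\OO_{K_\infty}$ is---so no delicate ring-theoretic property of the perfectoid-like tower $\cup_n \OO_K \dcroc{\phi_H^{-n}(Y)}$ enters.
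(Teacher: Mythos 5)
Your proof is correct and follows essentially the same route as the paper: reduce mod $p$, invoke Lemma \ref{kerthetet}, and bootstrap by $p$-adic successive approximation using completeness of $\afont$ and $p$-torsion freeness of $\OO_{K_\infty}$. Your preliminary verification that $Y/\phi_H^{-1}(Y)=g(\phi_H^{-1}(Y))$ is an honest element of $\afont$ killed by $\theta$ is a welcome addition that the paper leaves implicit.
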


\begin{proof}
If $x \in \ker(\theta)$, then $\overline{x} \in \efont$ is also killed by $\theta$ and is  divisible by $Y/\phi_H^{-1}(Y)$ in $\efont$ by lemma \ref{kerthetet}. We can therefore write $x= Y/\phi_H^{-1}(Y) \cdot x_1 + p y_1$ with $x_1,y_1 \in \afont$ and $\theta(y_1)=0$. By induction, we can write $x= Y/\phi_H^{-1}(Y) \cdot x_k + p^k y_k$ with $x_k,y_k \in \afont$ and $\theta(y_k)=0$ for all $k \geq 1$. Since $\afont$ is $p$-adically complete, this implies the claim.
\end{proof}

\begin{prop}
\label{kerkatz}
We have $\{ x \in \afont$, $\theta \circ \phi_H^n(x)=0$ for all $n \geq 0\} = Y \cdot \afont$.
\end{prop}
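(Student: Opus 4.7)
The plan is to iterate the divisibility from Prop \ref{kerthet} at every shifted kernel $\ker(\theta\circ\phi_H^n)$, extract a mod-$p$ valuation bound on $\bar x$, and then bootstrap to the full integral statement. Writing $J$ for the left-hand side, the inclusion $Y\cdot\afont\subset J$ is immediate: $\phi_H^n(Y)=[p^n]_H(Y)$ is a power series in $Y$, so $\theta(\phi_H^n(Y))=[p^n]_H(\theta(Y))=[p^n]_H(0)=0$, and since $\theta\phi_H^n$ is a ring homomorphism it vanishes on $Y\cdot\afont$.

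For the reverse inclusion I would first establish the shifted analogue of prop \ref{kerthet}, namely $\ker(\theta\circ\phi_H^n\colon\afont\to\OO_{K_\infty})=(\phi_H^{-n}(Y)/\phi_H^{-n-1}(Y))\cdot\afont$. Modulo $p$ this is proved exactly as in lemma \ref{kerthetet}: on each $k\dcroc{\phi_H^{-m}(Y)}$ with $m\geq n$, the kernel of $\bar\theta\circ\phi_H^n$ is cut out by the mod-$p$ reduction of the minimal polynomial of $u_{m-n}$, namely $\phi_H^{-m}(Y)^{q^{m-n-1}(q-1)}$, which coincides with $\phi_H^{-n}(Y)/\phi_H^{-n-1}(Y)$ in that ring up to a unit. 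The integral version then follows by the same $p$-adic iteration as in the proof of prop \ref{kerthet}.

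Given $x\in J$, prop \ref{kerthet} gives $x=(Y/\phi_H^{-1}(Y))\cdot c_0$ for some $c_0\in\afont$. Since $\phi_H(Y/\phi_H^{-1}(Y))=[p]_H(Y)/Y$ has constant term $p$, we have $\theta\phi_H(Y/\phi_H^{-1}(Y))=p$, and the equation $\theta\phi_H(x)=0$ forces $c_0\in(\phi_H^{-1}(Y)/\phi_H^{-2}(Y))\cdot\afont$. Iterating, with $\theta\phi_H^{n+1}(Y/\phi_H^{-n-1}(Y))=p^{n+1}\neq 0$ (the constant term of $[p^{n+1}]_H(Y)/Y$), I obtain $x=(Y/\phi_H^{-n-1}(Y))\cdot c_n$ with $c_n\in\afont$ for every $n\geq 0$. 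The decisive mod-$p$ step then follows: iterating $[p]_H(X)\equiv X^q\cdot(\mathrm{unit})\pmod p$ yields $[p^{n+1}]_H(\phi_H^{-n-1}(Y))\equiv\phi_H^{-n-1}(Y)^{q^{n+1}}\cdot v\pmod p$ with $v\in k\dcroc{\phi_H^{-n-1}(Y)}^\times$, so $Y/\phi_H^{-n-1}(Y)\equiv\phi_H^{-n-1}(Y)^{q^{n+1}-1}\cdot v\pmod p$; hence $\bar x\in\phi_H^{-n-1}(Y)^{q^{n+1}-1}\cdot\efont$ and $\val_Y(\bar x)\geq 1-1/q^{n+1}$. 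Letting $n\to\infty$ and invoking lemma \ref{divy} yields $\bar x\in Y\cdot\efont$, i.e.\ $x\in Y\cdot\afont+p\cdot\afont$.

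Finally I bootstrap: write $x=Yw+p\tilde x$ with $w,\tilde x\in\afont$; since $\theta\phi_H^n(Y)=0$ and $\OO_{K_\infty}$ is $p$-torsion-free, $\tilde x\in J$ as well. Applying the same argument to $\tilde x$ and iterating yields $x=YW_n+p^n\tilde x_n$ with $(W_n)_{n\geq 0}$ a $p$-adic Cauchy sequence in $\afont$ and $\tilde x_n\in J$; passing to the limit in the $p$-adically complete ring $\afont$ gives $x\in Y\cdot\afont$. The hard part will be the mod-$p$ estimate in the third paragraph: each $n$ only gives the strict inequality $\val_Y(\bar x)\geq 1-1/q^{n+1}<1$, and the conclusion $\val_Y(\bar x)\geq 1$ emerges only in the $n\to\infty$ limit, relying on the fact that the valuation group of $\efont$ contains values arbitrarily close to $1$ from below.
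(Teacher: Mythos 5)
Your proposal is correct and follows essentially the same route as the paper's proof: the same telescoping of the shifted kernels $\ker(\theta\circ\phi_H^j)=(\phi_H^{-j}(Y)/\phi_H^{-j-1}(Y))\cdot\afont$ (the paper phrases this as $I_n=Y/\phi_H^{-n}(Y)\cdot\afont$, you phrase it element-wise via the non-vanishing constants $p^{n+1}$), the same passage mod $p$ with the valuation bound $1-1/q^{n+1}$ and lemma \ref{divy}, and the same $p$-adic bootstrap. The only cosmetic difference is that you reprove the shifted kernel statement via minimal polynomials, whereas the paper deduces it directly from prop \ref{kerthet} by applying the automorphism $\phi_H^{-j}$ of $\afont$.
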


\begin{proof}
One inclusion is clear, since $\theta \circ \phi_H^n(Y)=0$ for all $n \geq 0$. We now prove the reverse inclusion. Prop \ref{kerthet} shows that $\ker(\theta) = Y/\phi_H^{-1}(Y) \cdot \afont$ and therefore that for all $j \geq 0$, $\ker(\theta \circ \phi_H^j) = \phi_H^{-j}(Y)/\phi_H^{-j-1}(Y) \cdot \afont$. For $n \geq 0$, let $I_n$ denote the set of $x \in \afont$ such that $(\theta \circ \phi_H^j) (x) = 0$ for $0 \leq j \leq n$. Since $(\theta \circ \phi_H^\ell)(\phi_H^{-j}(Y)/\phi_H^{-j-1}(Y)) \neq 0$ if $\ell < j$, and $Y/\phi_H^{-1}(Y) \cdot \phi_H^{-1}(Y)/\phi_H^{-2}(Y) \cdots \phi_H^{-(n-1)}(Y)/\phi_H^{-n}(Y) = Y/\phi_H^{-n}(Y)$, we have $I_n =  Y/\phi_H^{-n}(Y) \cdot \afont$. Let $I = \cap_{n \geq 0} I_n = \{ x \in \afont$, $\theta \circ \phi_H^n(x)=0$ for all $n \geq 0\}$.

The above reasoning and lemma \ref{divy} imply that in $\efont$, we have $\overline{I}= Y \cdot \efont$. Hence if $x \in I$, then $\overline{x} \in Y \cdot \efont$. We can therefore write $x= Y \cdot x_1 + p y_1$ with $x_1 \in \afont$ and $y_1 \in I$. By induction, we can write $x= Y \cdot x_k + p^k y_k$ with $x_k \in \afont$ and $y_k \in I$ for all $k \geq 1$. Since $\afont$ is $p$-adically complete, this implies the claim.
\end{proof}

\begin{rema}
\label{colthet}
Compare with prop 9.6 of \cite{Col02} (and 5.1.4 of \cite{Fon94}), noting however that our $\efont$ is not complete for the $Y$-adic topology. We could actually replace $\efont$ and $\afont$ above with their $Y$-adic completions, since in any case $\hatU(G) = \afont/Y$. In the Lubin--Tate case, we would then have $\efont = \et^+_K$ and $\afont = \at^+_K$ in the notation of ibid.

In general, $\phi_H(Y) = f(Y^q)$ in $k \dcroc{Y}$ for some reversible $f(Y)$ so that $\efont$ is still perfect.
\end{rema}

\begin{coro}
\label{katzinj}
The map $\calK : \hatU(G) \to C^0_{\Gal}(\tph,\OO_{\Cp})$ is injective.
\end{coro}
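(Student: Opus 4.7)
The plan is to combine the identification $\hatU(G) = \afont/Y\afont$ with proposition \ref{kerkatz}, via the reformulation of vanishing of $\calK(u)$ in terms of the family $\kappa_n$. The preliminary observation in \S 2 that any Galois continuous function $f : \tph \to \OO_{\Cp}$ is determined by the sequence $\{ f(p^n t)\}_{n \geq 0}$ means that $\calK(u) = 0$ if and only if $\kappa_n(u) = 0$ for all $n \geq 0$. So I only need to translate this family of conditions through the Frobenius on $\afont$.

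First, I would lift any $u \in \hatU(G) = \afont/Y\afont$ to some $\tilde u \in \afont$. Then I would use lemma \ref{phikatz}, together with the already noted identification $U(\phi_G) = \phi_H$ on each $U(G_n)$, to rewrite
\[
\kappa_n(u) = (\phi_C^n \calK(u))(t) = \calK(U(\phi_G)^n u)(t) = \kappa_0(\phi_H^n u) = \theta(\phi_H^n \tilde u),
\]
the last equality being the definition of $\theta$ as the reduction modulo $Y$ of $\kappa_0$. Thus the condition $\calK(u) = 0$ is exactly the condition $\theta \circ \phi_H^n(\tilde u) = 0$ for all $n \geq 0$.

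At this point proposition \ref{kerkatz} applies directly and gives $\tilde u \in Y \cdot \afont$, which means $u = 0$ in $\hatU(G) = \afont/Y\afont$. Hence $\calK$ is injective.

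There is no real obstacle left: the substantive content sits entirely in proposition \ref{kerkatz} (and behind that, in lemma \ref{kerthetet} with its use of prop \ref{katzval} to pin down the minimal polynomial of $u_n$). The only thing to double-check when writing up is the compatibility $\phi_H(Y\afont) \subset Y\afont$, which makes the Frobenius descend to $\hatU(G)$ and lets us apply proposition \ref{kerkatz} to any chosen lift $\tilde u$; this is immediate from $\phi_H(Y) = [p]_H(Y) \in Y \cdot \OO_K\dcroc{Y}$.
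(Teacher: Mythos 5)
Your proof is correct and follows exactly the paper's route: identify $C^0_{\Gal}(\tph,\OO_{\Cp})$ with $\prod'_{n\geq 0}\OO_{K_\infty}$, observe via lemma \ref{phikatz} that $\calK$ is induced by $\{\theta\circ\phi_H^n\}_{n\geq 0}$ on $\afont$, and conclude from proposition \ref{kerkatz} together with $\hatU(G)=\afont/Y$. The only addition is your explicit (and correct) check that $\theta\circ\phi_H^n$ kills $Y\cdot\afont$, which the paper disposes of as the ``clear inclusion'' in the proof of proposition \ref{kerkatz}.
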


\begin{proof}
If we write $C^0_{\Gal}(\tph,\OO_{\Cp}) = \prod'_{n \geq 0} \OO_{K_\infty}$, then $\calK : \hatU(G) \to C^0_{\Gal}(\tph,\OO_{\Cp})$ comes from $\{ \theta \circ \phi_H^n \}_{n \geq 0} : \afont \to \prod'_{n \geq 0} \OO_{K_\infty}$ and the claim results from prop \ref{kerkatz} above and the fact that $\hatU(G) = \afont/Y$ by lemma \ref{undense}.
\end{proof}

This finishes the proof of Theorem B.

\end{document}